\newtheorem{thm}{Theorem}
\newtheorem{cor}[thm]{Corollary}
\newtheorem{lem}[thm]{Lemma}
\newtheorem{prop}[thm]{Proposition}
\newdefinition{defn}[thm]{Definition}
\newdefinition{exmp}[thm]{Example}
\newdefinition{rem}[thm]{Remark}
\newcommand\dto{\dashrightarrow}
\newcommand\hto{\hookrightarrow}
\newcommand\lto{\longrightarrow}
\newcommand\nto{\stackrel}
\newcommand\lot{\longleftarrow}
\def\NN{{\bm{N}}}
\def\ZZ{{\bm{Z}}}
\def\kk{{\bm{k}}}
\def\PP{{\bm{P}}}
\def\RR{{\bm{R}}}
\def\AA{{\bm{A}}}
\newcommand\KKK{\mathscr K}
\newcommand\Sc{\mathscr{S}}
\newcommand\Hc{\mathscr{H}}
\newcommand\Zc{\mathcal{Z}}
\newcommand\Cc{\mathcal{C}}  
\newcommand\Tc{\mathscr{T}}
\DeclareMathOperator\Res{Res}
\DeclareMathOperator\conv{conv}
\DeclareMathOperator\Syz{Syz}
\def\endd{\mathrm{end}}
\DeclareMathOperator\coker{coker}
\DeclareMathOperator\Proj{Proj}
\DeclareMathOperator\Sym{Sym}
\DeclareMathOperator\ann{ann}
\DeclareMathOperator\Hom{Hom}
\DeclareMathOperator\im{im}
\DeclareMathOperator\length{length}
\DeclareMathOperator\res{Res}
\DeclareMathOperator\codim{codim}
\DeclareMathOperator\Spec{Spec}
\DeclareMathOperator\Biproj{Biproj}
\DeclareMathOperator\ext{Ext}
\DeclareMathOperator\Symm{Sym}
\newcommand\SIA{\Symm _A (I)}
\def\div{\textrm{div}}
\DeclareMathOperator\mat{Mat}
\DeclareMathOperator\relint{relint}
\newcommand\X{\textbf{X}}
\newcommand\T{\textbf{T}}
\def\s{\textbf{s}}
\def\h{\textbf{h}}
\def\x{\textbf{x}}
\newcommand\Nc{\mathcal{N}}
\newcommand\gen[1]{\left\langle#1\right\rangle}
\newcommand\paren[1]{\left(#1\right)}
\def\l.{\mathcal{L}_{\bullet}}
\def\ff.{\mathcal{F}_{\bullet}}
\def\a.{\mathcal{A}_\bullet}
\def\b.{\mathcal{B}_\bullet}
\def\k.{\mathcal{K}_{\bullet}}
\def\M.{\mathscr{M}_\bullet}
\def\Z.{\mathscr{Z}_\bullet}
\def\B.{\mathscr{B}_\bullet}
\def\LL{\mathcal L}
\def\UU{\mathcal U}
\def\LL{\mathcal L}
\def\BB{\mathcal B}
\def\KK{\mathcal K}
\def\EEE{\mathscr E}
\def\pp{\mathfrak{p}}
\def\mm{\mathfrak{m}}
\def\kkk{\kappa}
\def\pp{\mathfrak{p}}
\def\qq{\mathfrak{q}}
\def\mm{\mathfrak{m}}
\def\k.{\mathcal{K}_{\bullet}}
\def\BB{\mathcal B}
\def\KK{\mathcal K}
\def\P1{\PP^1}
\def\fxi{f_iY_i-g_iX_i}
\begin{document}
\begin{frontmatter}
\title{Compactifications of rational maps, and the implicit equations of their images.}
\author{Nicol\'{a}s Botbol\fnref{thanks}}
\address{Departamento de Matem\'atica\\
FCEN, Universidad de Buenos Aires, Argentina \\
\& Institut de Math\'ematiques de Jussieu \\
Universit\'e de P. et M. Curie, Paris VI, France \\
E-mail address: nbotbol@dm.uba.ar
}
\fntext[thanks]{This work was partially supported by the project ECOS-Sud A06E04, UBACYT X064, CONICET PIP 5617 and ANPCyT PICT
17-20569, Argentina.}
%
%
%
\begin{abstract}
In this paper we give different compactifications for the domain and the codomain of an affine rational map $f$ which parametrizes a hypersurface. We show that the closure of the image of this map (with possibly some other extra hypersurfaces) can be represented by a matrix of linear syzygies. 
We compactify $\AA^{n-1}$ into an $(n-1)$-dimensional projective arithmetically Cohen-Macaulay subscheme of some $\PP^N$.  One particular interesting compactification of $\AA^{n-1}$ is the toric variety associated to the Newton polytope of the polynomials defining $f$. We consider two different compactifications for the codomain of $f$: $\PP^n$ and $(\PP^1)^n$. In both cases we give sufficient conditions, in terms of the nature of the base locus of the map, for getting a matrix representation of its closed image, without involving extra hypersurfaces.
This constitutes a direct generalization of the corresponding results established in \cite{BuJo03}, \cite{BCJ06}, \cite{BD07}, \cite{BDD08} and~\cite{Bot08}.
\end{abstract}

\end{frontmatter}

\section{Introduction}\label{sec:intro}
The interest in computing explicit formulas for resultants and discriminants goes back to B\'ezout, Cayley, Sylvester and many others in the nineteenth century. It has been emphasized in the latest years due to the increase of computing power. Resultants have been related to the implicitization of a given rational maps; in turn, both resultants and discriminants can be seen as the implicit equation of a suitable map (cf.\ \cite{DFS07}). Lately, rational maps appeared in computer-engineering contexts, mostly applied to shape modeling using computer-aided design methods for curves and surfaces.
 
Rational algebraic curves and surfaces can be described in several different ways, the most common being parametric and implicit representations. Parametric representations describe the geometric object as the image of a rational map, whereas implicit representations describe it as the set of points verifying a certain algebraic condition, e.g.\ as the zeros of a polynomial equation. Both representations have a wide range of applications in Computer Aided Geometric Design (CAGD), and depending on the problem one needs to solve, one or the other might be better suited. It is thus interesting to be able to pass from parametric representations to implicit equations. This is a classical problem and there are numerous approaches to its solution, see \cite{SC95} and \cite{Co01} for a good historical overview. However, it turns out that the implicitization problem is computationally difficult. 

A promising alternative suggested in \cite{BD07} is to compute a so-called \textit{matrix representation} instead, which is easier to compute but still shares some of the advantages of the implicit equation. For a given hypersurface $\Hc \subset \PP^n$, a matrix $M$ with entries in the polynomial ring $\kk[X_0,\ldots,X_n]$ is called a {\slshape representation matrix} of $\Hc$ if it is generically of full rank and if the rank of $M$ evaluated in a point of $\PP^n$ drops if and only if the point lies on $\Hc$ (cf.\ \cite{BDD08}). Equivalently, a matrix $M$ represents $\Hc$ if and only if the greatest common divisor of all its minors of maximal size is a power of the homogeneous implicit equation $F \in \kk[X_0,\ldots,X_n]$ of $\Hc$. 

In the case of a planar rational curve $\Cc$ given by a parametrization of the form
$\AA^1 \stackrel{f}{\dashrightarrow} \AA^2$, $s \mapsto \paren{\frac{f_1(s)}{f_3(s)},\frac{f_2(s)}{f_3(s)}}$,
where $f_i \in \kk[s]$ are coprime polynomials of degree $d$ and $\kk$ is a field,
a linear syzygy (or moving line) is a linear relation on the polynomials $f_1,f_2,f_3$, i.e.\ a linear form $L = h_1X_1+h_2X_2+h_3X_3$ in the variables $X_1,X_2,X_3$ and with polynomial coefficients $h_i \in \kk[s]$ such that $\sum_{i=1,2,3} h_i f_i =0$. We denote by $\Syz (f)$ the set of all those linear syzygy forms and for any integer $\nu$ the graded part $\Syz(f)_\nu$ of syzygies of degree at most $\nu$. To be precise, one should homogenize the $f_i$ with respect to a new variable and consider $\Syz (f)$ as a graded module here. It is obvious that $\Syz(f)_\nu$ is a finite-dimensional $\kk$-vector space with basis $(L_1,\ldots,L_k)$ ($k=k(\nu)$) obtained by solving a linear system. If $L_i=\sum_{|\alpha|=\nu}s^\alpha L_{i,\alpha}(X_1,X_2,X_3)$, we define the matrix $M_\nu=(L_{i,\alpha})_{1\leq i\leq k, |\alpha|=\nu}$, that is, the coefficients of the $L_i$ with respect to a $\kk$-basis of $\kk[s]_\nu$ form the  columns of the matrix. Note that the entries of this matrix are linear forms in the variables $X_1,X_2,X_3$ with coefficients in the field $\kk$. Let $F$ denote the homogeneous implicit equation of the curve and $\deg(f)$ the degree of the parametrization as a rational map. Intuitively, $\deg(f)$ measures how many times the curve is traced. It is known that for $\nu \geq d-1$, the matrix $M_\nu$ is a representation matrix; more precisely: if $\nu=d-1$, then $M_\nu$ is a square matrix, such that $\det(M_\nu)=F^{\deg(f)}$. Also, if $\nu \geq d$, then $M_\nu$ is a non-square matrix with more columns than rows, such that the greatest common divisor of its minors of maximal size equals $F^{\deg(f)}$. In other words, one can always represent the curve as a square matrix of linear syzygies. One could now actually calculate the implicit equation.

For surfaces, matrix representations have been studied in the recent article \cite{BDD08} for the case of $2$-dimensional projective toric varieties. Previous works had been done in this direction, with two main approaches: One allows the use of quadratic syzygies (or higher-order syzygies) in addition to the linear syzygies, in order to be able to construct square matrices, the other one only uses linear syzygies as in the curve case and obtains non-square representation matrices.

The first approach using linear and quadratic syzygies (or moving planes and quadrics) has been treated in \cite{Co03} for
base-point-free homogeneous parametrizations and some genericity assumptions, when $\Tc=\PP^2$. The authors of \cite{BCD03} also treat the case of toric surfaces in the presence of base points. In \cite{AHW05}, square matrix representations of bihomogeneous parametrizations, i.e.\ $\Tc=\PP^1 \times \PP^1$, are constructed with linear and quadratic syzygies, whereas \cite{KD06} gives such a construction for parametrizations over toric varieties of dimension 2. The methods using quadratic syzygies usually require additional conditions on the parametrization and the choice of the quadratic syzygies is often not canonical. 

The second approach, even though it does not produce square matrices, has certain advantages, in particular in the sparse setting that we present. In previous publications, this approach with linear syzygies, which relies on the use of the so-called approximation complexes has been developed in the case $\Tc=\PP^2$, see for example \cite{BuJo03}, \cite{BC05}, and \cite{Ch06}, and in \cite{BD07} for  bihomogeneous parametrizations of degree $(d,d)$. However, for a given affine parametrization $f$, these two varieties are not necessarily the best choice of a compactification, since they do not always reflect well the combinatorial structure of the polynomials $f_1,\ldots,f_4$. The method was extended to a much larger class of varieties, namely toric varieties of dimension $2$ (cf.\ \cite{BDD08}), where it was shown that it is possible to choose a ``good'' toric compactification of $(\AA^*)^2$ depending on the polynomials $f_1,\ldots,f_4$, which makes the method applicable in cases where it failed over $\PP^2$ or $\PP^1 \times \PP^1$ and also, that it is significantly more efficient leading to smaller representation matrices. 

\medskip

In this article we extend the method of computing an implicit equation of a parametrized hypersurface focusing on different compactifications of the domain $\Tc$ and of the codomain $\PP^n$ and $(\P1)^n$. Hereafter we will always assume that $\Tc$ is embedded in $\PP^N$, and its coordinate ring $A$ is $n$-dimensional, graded and Cohen-Macaulay.

\medskip

In Section \ref{sec3Pn} we focus on the implicitization problem for a rational map $\varphi:\Tc \dto \PP^n$ defined by $n+1$ polynomials of degree $d$. We extend the method for projective $2$-dimensional toric varieties developed in \cite{BDD08} to a map defined over an $(n-1)$-dimensional arithmetically Cohen-Macaulay closed scheme $\Tc$ embedded in $\PP^N$. We show that we can relax the hypotheses on the base locus by allowing it to be a zero-dimensional almost locally complete intersection scheme.

\medskip

In order to consider more general parametrizations given by rational maps of the form $f=\paren{\frac{f_1}{g_1},\hdots,\frac{f_n}{g_n}}$ with different denominators $g_1,\hdots,g_n$, we develop in Section \ref{sec4multiproj} the study of the $(\PP^1)^n$ compactification of the codomain.  With this approach, we generalize, in the spirit of \cite{Bot08}, the method of implicitization of projective hypersurfaces embedded in $(\P1)^n$ to general hypersurfaces parametrized by any $(n-1)$-dimensional arithmetically Cohen-Macaulay closed subscheme of $\PP^N$. As in the previously mentioned works, we compute the implicit equation as the determinant of a complex which coincides with the gcd of the maximal minors of the last matrix of the complex, and we give a depth study of the geometry of the base locus.

\medskip

Section \ref{sec5algorithmic} is devoted to the algorithmic approach of  both cases studied in Section \ref{sec3Pn} and \ref{sec4multiproj}. We show how to compute the dimension of the representation matrices obtained in both cases by means of the Hilbert functions of the ring $A$ and its Koszul cycles. In the last part of this section, we show, for the case of toric  parametrizations given from a polytope $\Nc(f)$ (cf.\ \ref{defNf}), how the interplay between multiples of $\Nc(f)$ and degree of the maps may lead to have smaller matrices.

\medskip

We conclude by giving in Section \ref{sec6examples} several examples. First, we show in a very sparse setting the advantage of not considering the homogeneous compactification of the domain when denominators are very different. In the second example, we extend this idea to the case of a generic affine rational map in dimension $2$ with fixed Newton polytope.
In the last example we give, for a parametrized toric hypersurface of $(\P1)^n$, a detailed analysis of the relation between the nature of the base locus of a map and the extraneous factors appearing in the computed equation.


\section{General setting} \label{sec2setting}\label{ImageCodim1}

Throughout this section we will give a general setting for the implicitization problem of hypersurfaces. Our aim is to analyze how far these techniques from homological commutative algebra (syzygies and graded resolutions) can be applied.

\medskip

Write $\AA^k:= \Spec (\kk[T_1,\hdots,T_k])$ for the $k$-dimensional affine space over $\kk$. Assume we are given a rational map
\begin{equation}\label{initsetting}
f: \AA^{n-1} \dto \AA^n : \s:=(s_1,\hdots,s_{n-1})  \mapsto \paren{\frac{f_1}{g_1},\hdots,\frac{f_n}{g_n}}(\s)
\end{equation}
where $\deg(f_i)=d_i$ and $\deg(g_i)=e_i$ without common factors. Observe that this setting is general enough to include all classical implicitization problems. Typically all $g_i$ are assumed to be equal and a few conditions on the degrees are needed, depending on the context. 

We consider a rational map $\psi:\Tc \dto \KKK$, where $\Tc$ and $\KKK$ are suitable compactifications of a suitable dense open subset of $\AA^{n-1}$ and $\AA^n$ respectively, in such a way that the map $f$ extends through $\Tc$ and $\KKK$ via $\psi$ and that the closed image of $f$ can be recovered from the closed image of $\psi$. 

Assume $\Tc$ can be embedded in some $\PP^N$, and set $A$ for the homogeneous coordinate ring of $\Tc$. Since $\AA^{n-1}$ is irreducible, so is $\Tc$, hence $A$ is a domain. Assume also that $\Tc$ defines via $\psi$ a hypersurface in $\KKK$, hence, $\ker(\psi^*)$ is a principal ideal, generated by the implicit equation. 

Most of our results are stated for a general arithmetically Cohen-Macaulay scheme as domain. Nevertheless, the map \eqref{initsetting} gives rise, naturally, to a toric variety $\Tc$ on the domain (cf.\ \cite[Sect.\ 2]{KD06}, \cite{Co03b}, and \cite[Ch.\ 5 \& 6]{GKZ94}) associated to the following polytope $\Nc(f)$.

\begin{defn}\label{defNf}
Let $f$ denote a map as in Equation \eqref{initsetting}. We will write 
\[
 \Nc(f):=\conv\paren{\bigcup_{i=1}^n \paren{\Nc(f_i)\cup \Nc(g_i)}}
\]
the convex hull of the union of the Newton polytopes of all the polynomials defining the map $f$.
\end{defn}

There is a standard way of associating a semigroup $S_\Nc$ to a polytope $\Nc\subset \RR^{n-1}$: take $\iota: \RR^{n-1}\hto \RR^n: x\mapsto (x,1)$, and define $S_\Nc$ as the semigroup generated by the lattice points in $\iota(\Nc)$. Due to a theorem of Hochster, if $S_\Nc$ is normal then the semigroup algebra $\kk[S_\Nc]$ is Cohen-Macaulay. Unluckily, it turns out that $S_\Nc$ is in general not always normal. A geometric or combinatorial characterization of the normality of $\kk[S_\Nc]$ is one of the most important open problem in combinatorial algebra (cf.\ \cite{BGN97}). 

Note that $m\Nc\times \{m\}=\{(p_1+\cdots+p_m,m)\ :\ p_i\in\Nc\}\subset S_\Nc\cap (\ZZ^{n-1}\times \{m\})$ for any $m\in \NN$, but in general these two sets are not equal.
When this happens for all $m\in \NN$, we say that the polytope $\Nc$ is normal, equivalently $(m\cdot \Nc) \cap \ZZ^{n-1} = m \cdot (\Nc \cap  \ZZ^{n-1})$ for all $m \in \NN$, and in this case it follows that $\kk[S_\Nc]$ is Cohen-Macaulay. 

In this article we focus on the study of toric varieties by fixing an embedding. Changing $\Nc$ by a multiple $l\cdot \Nc$ changes the embedding, hence, we will fix the polytope. Since we also need Cohen-Macaulayness of the quotient ring by the corresponding toric ideal in several results, we will assume throughout that $\Nc$ is normal.

\begin{rem}\label{NfAlwaysNormal}
Given a map $f$ as in Equation \eqref{initsetting}, we will always assume that $\Nc:=\Nc(f)$ is normal. Therefore, the coordinate ring $A$ of $\Tc$ will be always Cohen-Macaulay, hence $\Tc\subset \PP^N$ will be \emph{arithmetically Cohen-Macaulay (aCM)}. This is automatic when $n=2$.
\end{rem}

The polytope $\Nc(f)$ defines a $(n-1)$-dimensional projective toric variety $\Tc$ provided with an ample line bundle which defines an embedding: if $N=\#(\Nc(f) \cap \ZZ^{n-1})-1$ we have $\Tc \subseteq \PP^N$  (cf.\ \cite{Co03b}). Write $\Nc'(f)$ for the smallest lattice contraction of $\Nc(f)$ (that is $\Nc(f)=d\Nc'(f)$ where $\Nc'(f)$ is a lattice polytope and $d$ is maximal with this property), and $\rho$ for the embedding determined by this ample sheaf. We get that the map
\begin{equation}
 (\AA^*)^{n-1}  \stackrel{\rho}{\hookrightarrow} \PP^{N'} : (\s) \mapsto (\ldots : \s^\alpha  : \ldots),
\end{equation}
where $\alpha \in \Nc'(f) \cap \ZZ^{n-1}$ and $N'=\#(\Nc'(f) \cap \ZZ^{n-1})-1$, which factorizes $f$ through a rational map $\psi: \Tc\dto \KKK$. 

The main reason for considering projective toric varieties associated to the Newton polytope $\Nc(f)$ of $f$, is based on the following fact.

\begin{rem}
 Assume $f$ is as in Equation \eqref{initsetting}, with $g_1=\cdots =g_n$. Write $f_0:=g_i$ for all $i$. Assume also that all $f_i$ are generic with Newton polytope $\Nc$, and hence write $\Nc:=\Nc(f_i)$ for all $i$. Set $N:=\#(\Nc \cap \ZZ^{n-1})-1$ and let $\Tc \subset \PP^{N}$ be the toric variety associated to $\Nc$. Write $\phi: \Tc \dto \PP^n: \T \mapsto (h_0:\cdots:h_n)$ the map induced by $f$. Since the coefficients are generic, the point associated to these coefficients is not in $V(\Res_\Nc(h_0,\hdots,h_n))$; where $V(\Res_\Nc(h_0,\hdots,h_n))$ stands for the zeroes locus of the sparse resultant $\Res_\Nc(h_0,\hdots,h_n)$ associated to $h_0,\hdots,h_n$. Hence, they have no common root in $\Tc$. 
Thus, $\phi$ has empty base locus in $\Tc$.

If we take instead another lattice polytope $\tilde{\Nc}$ strictly containing $\Nc$, the $f_i$ will not be generic relative to $\tilde{\Nc}$, and typically the associated map $\tilde{\phi}$ will have a non-empty base locus in the toric variety $\tilde{\Tc}$ associated to $\tilde{\Nc}$.
\end{rem}

\medskip

Henceforward, let $\Tc$ be any $(n-1)$-dimensional projective arithmetically Cohen-Macaulay scheme provided with an embedding $\rho$ into some $\PP^N$. Assume $\KKK:=\PP^n$ or $\KKK:=(\P1)^n$. In both cases we consider the diagram 
\[
 \Tc \nto{\pi_1}{\lot} \Tc \times \KKK \nto{\pi_2}{\lto} \KKK,
\]
where $\pi_1$ and $\pi_2$ are the natural projections. Since $\psi:\Tc \dto \KKK$ is not, in principle, defined everywhere in $\Tc$, we set $\Omega$ for the open set of definition of $\psi$. Precisely, we define

\begin{defn}\label{defXyOmega}
 Assume $\KKK:=\PP^n$, in this setting, $\psi$ will be denoted by $\varphi$. Hence, write $\varphi:\Tc \dto \PP^n$ given by $\s\mapsto (h_0:\cdots:h_{n})(\s)$, then define the base locus of $\varphi$ as the closed subscheme of $\Tc$
\[
 X_{\PP^n}:=\Proj\paren{A/(h_0,\hdots,h_{n})}.
\]
Similarly, for $\KKK:=(\P1)^n$, $\psi$ will be denoted by $\phi$. Hence, write $\phi:\Tc \dto (\P1)^n$ given by $\s\mapsto (f_1:g_1)(\s)\times\cdots\times(f_n:g_n)(\s)$, then define the base locus of $\phi$ as the closed subscheme of $\Tc$
\[
 X_{(\P1)^n}:=\Proj\paren{A/\prod_i(f_i,g_i)}.
\]
In the sequel $X$ will stand for $X_{\PP^n}$ and $X_{(\P1)^n}$, and will be understood, depending on the context. In any case, we call $\Omega$ the complement of the base locus, namely $\Omega:=\Tc \setminus X$. Let $\Gamma_\Omega$ be the graph of $\phi$ or $\varphi$ into $\subset \Omega \times \KKK$. 
\end{defn}

 Clearly $\Gamma_\Omega \nto{\pi_1}{\lto} \Omega$ is birational, which is in general not the case over $X$. As was shown in \cite{Bot08}, the scheme structure of the base locus when $X=X_{(\P1)^n}$ can be fairly complicated and extraneous factors may occur when projecting on $(\P1)^n$ via $\pi_2$ (cf.\ Section $4.3$). This motivates the need for a splitting of the base locus, giving rise to families of multiprojective bundles over $\Tc$.

Due to this important difference between the projective and multiprojective case, we need to separate the study of the two settings. In the next section, we treat the case $\KKK:=\PP^n$, and in Section $4$ the case $\KKK:=(\P1)^n$. In both situations, we find a matrix representation of the closed image of $\varphi$ and $\phi$, and we compute the implicit equation and extraneous factors that occur.

\medskip

\section{The case $\KKK=\PP^n$}\label{sec3Pn}

In this section we focus on the computation of the implicit equation of a hypersurface in $\PP^n$, parametrized by an $(n-1)$-dimensional arithmetically Cohen Macaulay (aCM) subscheme of some projective space $\PP^N$. We generalize \cite{BuJo03}, \cite{BCJ06}, \cite{BDD08}, et.\ al., and we give a more general result on the acyclicity of the approximation complex of cycles, by relaxing conditions on the base ring and on the base locus. 

\medskip
Henceforward in this section, let $\Tc$ be a $(n-1)$-dimensional projective aCM closed scheme over a field $\kk$, embedded in $\PP^N_{\kk}$, for some $N\in \NN$. Write $A=\kk[T_0,\hdots,T_N]/J$ for its CM coordinate ring, and $J$ the homogeneous defining ideal of $\Tc$. Set $\T:= T_0,\hdots,T_N$ the variables in $\PP^N$, and $\X$ the sequence $X_0,\hdots,X_n$ of variables in $\PP^n$.

We denote $\mm:=A_+=(\T)\subset A$, the maximal homogeneous irrelevant ideal of $A$.

Let $\varphi$ be a finite map defined over a relative open set $U$ in $\Tc$ defining a hypersurface in $\PP^n$, e.g.\ $U=\Omega$:
\begin{equation}\label{eqSettingPn}
 \PP^N \supset \Tc \nto{\varphi}{\dto} \PP^n : \T \mapsto (h_0:\cdots:h_n)(\T),
\end{equation}
where $h_0,\hdots,h_n$ are homogeneous elements of $A$ of degree $d$. Set $\h:=h_0,\hdots,h_n$. The map $\varphi$ gives rise to a morphism of graded $\kk$-algebras in the opposite sense
\begin{equation}
 \kk[X_0,\hdots,X_n] \nto{\varphi^\ast}{\lto} A: X_i \mapsto h_i(\T).
\end{equation}
Since $\ker(\varphi^\ast)$ is a principal ideal in $\kk[\X]$, write $H$ for a generator. We proceed as in \cite{BuJo03} or in \cite{BDD08} to get a matrix such that the gcd of its maximal minors gives $H^{\deg(\phi)}$, or possibly, a multiple of it.

\begin{rem}\label{RemToricCasePn}
Observe that if we start with an affine setting as in \eqref{initsetting}, $\Tc\subset \PP^N$ can be taken as the embedded toric variety associated to $\Nc'(f)$. In the classical implicitization problem it is common to suppose that $g_i=g_j$ for all $i$ and $j$, and $\deg(f_i)=\deg(g_i)=d$ for all $i$. Hence write $f_0$ for any of the $g_i$.  This setting gives naturally rise to a homogeneous compactification of the codomain, defined by the embedding
\begin{equation}
\AA^n \nto{j}{\hto} \PP^n: \x \mapsto (1:\x).
\end{equation}
It is clear that for $f_0,\hdots,f_n$ taken as above, the map $f: \AA^{n-1}\dto \AA^n$ of equation \eqref{initsetting} compactifies via $\rho$ and $j$ to $\varphi:\Tc \dto \PP^n$. It is important to note that $\overline{\im(f)}$ can be obtained from $\overline{\im(\varphi)}$ and vice-versa, via the classical (first variable) dehomogenization and homogenization respectively. Finally, we want to give a matrix representation for a toric hypersurface of $\PP^n$ given as the image of the toric rational map $\varphi: \Tc \dto \PP^n : \T \mapsto (h_0:\cdots:h_n)(\T)$.
\end{rem}

Next, we introduce the homological machinery needed to deal with the computations of the implicit equations and the representation matrix of the hypersurface.

\subsection{Homological algebra tools}

For simplicity, we denote by $T_i$ the classes of each variable in the quotient ring $A=\kk[\T]/J$. Recall that $A$ is canonically graded, each variable having weight $1$. Let $I=(h_0,\hdots,h_{n}) \subset A$ be the ideal generated by the $h_i$'s.

More precisely, we will see that the implicit equation of $\Sc$ can be recovered as the determinant of certain graded parts of the $\Zc$-complex we define below. We denote by ${\Zc}_{\bullet}$ the approximation complex of cycles associated to the sequence $h_0,\hdots,h_{n}$ over $A$ (cf.\ \cite{Va94}), as in the Definition \ref{defZcomplex}. 

\medskip
 Consider the Koszul complex $(K_\bullet(\h,A),\delta_\bullet)$ associated to $h_0,\ldots,h_{n}$ over $A$ and denote $Z_i=\ker(\delta_i)$, $B_i=\im(\delta_{i+1})$. It is of the form
\[
K_\bullet(\h,A): \quad  A[-(n+1)d] \nto{\delta_{n+1}}{\lto} A[-nd]^{n+1} \nto{\delta_n}{\lto} \cdots \nto{\delta_2}{\lto} A[-d]^{n+1} \nto{\delta_1}{\lto} A
\]
where the differentials are matrices such that every non-zero entry is $\pm h_i$. 

Write $K_i:= \bigwedge^iA^{n+1}[-i\cdot d]$. Since $Z_i\subset K_i$, it keeps the shift in the degree. Note that with this notation the sequence 
\begin{equation}\label{sesZKB}
 0\to Z_i\to K_i\to B_{i-1}\to 0
\end{equation}
is exact graded, and no degree shift is needed.

We introduce new variables $X_0,\ldots,X_{n}$ with $\deg(X_i)=1$. Since $A$ is $\NN$-graded, $A[\X]$ inherits a bigrading. 

\begin{defn}\label{defZcomplex}
Denote by $\Zc_i= Z_i[i \cdot d] \otimes_A A[\X]$ the ideal of cycles in $A[\X]$, and write $[-]$ for the degree shift in the variables $T_i$ and $(-)$ the one in the $X_i$. The approximation complex of cycles $(\Zc_\bullet(\h,A),\epsilon_\bullet)$, or simply $\Zc_\bullet$, is the complex 
\begin{equation}\label{CompAppZ}
\Zc_\bullet(\h,A): \quad 0 \lto \Zc_n(-n) \nto{\epsilon_n}{\lto} \Zc_{n-1}(-(n-1)) \nto{\epsilon_{n-1}}{\lto} \cdots \nto{\epsilon_2}{\lto} \Zc_1(-1) \nto{\epsilon_1}{\lto} \Zc_0
\end{equation}
where the differentials $\epsilon_\bullet$ are obtained by replacing $h_i$ by $X_i$ for all $i$ in the matrices of $\delta_\bullet$. 
\end{defn}

It is important to observe that $H_0(\Zc_\bullet) = A[\X]/\im(\epsilon_1) \cong \SIA$. Note that the degree shifts indicated in the complex above are with respect to the grading given by the $X_i$'s, while the degree shifts with respect to the grading of $A$ are already contained in our definition of the $\Zc_i$'s. From now on, when we take the degree $\nu$ part of the approximation complex, denoted $(\Zc_\bullet)_\nu$, it should always be understood to be taken with respect to the grading induced by $A$.

\medskip

Under certain conditions on the base locus of the map, this complex is acyclic and provides a free $\kk[\X]$-resolutions of $(\SIA)_\nu$ for all $\nu$. Hence, we focus on finding acyclicity conditions for the complex ${\Zc}_{\bullet}$. In this direction we have

\begin{lem}\label{ZacycALCI} 
Let $m\geq n$ be non-negative integers, $A$ an $m$-dimensional graded Cohen-Macaulay ring and $I=(h_0,\ldots,h_{n}) \subset A$ is of codimension (hence depth) at least $n-1$ with $\deg(h_i)=d$ for all $i$. Assume that $X:=\Proj (A/I) \subset \Sc$ is locally defined by $n$ equations (i.e.\ locally an almost complete intersection). Then ${\Zc}_{\bullet}$ is acyclic.
\end{lem}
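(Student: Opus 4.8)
The plan is to prove acyclicity of $\Zc_\bullet$ by the standard acyclicity-criterion-plus-local-analysis strategy used for approximation complexes, adapted to the relative setting over an aCM ring $A$. Recall the well-known fact (see \cite{Va94}) that, since the $h_i$ generate an ideal of positive codimension, $\Zc_\bullet$ is acyclic if and only if $\depth_{I}(Z_i) \geq i$ for all $i$ (equivalently, the $Z_i$ satisfy the ``sliding depth'' type condition); more precisely one uses the acyclicity lemma of Peskine--Szpiro applied after localizing. So the first step is to reduce the global acyclicity to a statement about the depths of the Koszul cycle modules $Z_i = Z_i(\h,A)$, and then to a statement that can be checked locally on $\Spec A$.

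Next I would carry out the local analysis. Localize at a prime $\pp$ of $A$. Away from $V(I)$ the sequence $\h$ generates the unit ideal, so $\Zc_\bullet$ localizes to a split-exact (indeed trivially acyclic) complex; there is nothing to check. On $V(I)$, split into two cases. If $\pp \notin \Proj(A/I)$, i.e.\ $\pp \supseteq I$ but $\pp$ is (a specialization of) the irrelevant ideal — handled by the fact that $\Proj(A/I)$ already captures the relevant part — one checks the depth inequality using that $\codim I \geq n-1$ together with Cohen--Macaulayness of $A$: at such primes the Koszul complex $K_\bullet(\h,A_\pp)$ has homology controlled by $\codim I_\pp$, giving $\depth(Z_i)_\pp \geq i$ because $A_\pp$ is CM of the right dimension. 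If instead $\pp \in X = \Proj(A/I)$, then by hypothesis $X$ is locally an almost complete intersection, so $I_\pp$ is generated by $n$ elements (one fewer than the $n+1$ generators $h_0,\dots,h_n$); here one uses the structure of the Koszul homology of an almost complete intersection — the modules $H_i(\h,A_\pp)$ vanish for $i \geq 2$ and $H_1$ is cyclic — to compute $\depth(Z_i)_\pp$ and verify $\depth(Z_i)_\pp \geq i$, again invoking that $A_\pp$ is CM with $\dim A_\pp$ large enough since $\codim I \geq n-1$. Combining the three cases, $\depth_I(Z_i) \geq i$ for all $i$, hence $\Zc_\bullet$ is acyclic.

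The main obstacle I expect is the bookkeeping of depths at the primes $\pp \in X$: one must show that being a local almost complete intersection of the expected codimension (codimension $n-1$, cut locally by $n$ equations) forces exactly the right depth on each $Z_i$, and this requires carefully combining the almost-complete-intersection structure of Koszul homology (via the presentation $Z_{i-1} \hookrightarrow K_{i-1} \to B_{i-2}$ in \eqref{sesZKB} and induction on $i$) with the depth lemma, using Cohen--Macaulayness of $A$ at every step to control $\depth(A_\pp/I_\pp)$ and the depths of the syzygy modules. A secondary technical point is ensuring the hypotheses $m \geq n$ and $\codim I \geq n-1$ are used sharply: the codimension bound guarantees $\Zc_\bullet$ has the correct length and that the generic (codimension $0$) behavior is as expected, while $m \geq n$ guarantees there is enough room in $A_\pp$ for the depth inequalities to hold. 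Once the local depth estimates are in hand, the passage back to global acyclicity is routine via the acyclicity lemma.
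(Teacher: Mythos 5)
Your overall strategy has a genuine gap, and it sits exactly at the point where the lemma is delicate. First, the ``well-known fact'' you start from is not a correct statement of the acyclicity criterion: the criterion actually available (and the one the paper uses, \cite[Thm.\ 12.9]{HSV}) is that $\Zc_\bullet$ is acyclic if and only if $I$ is generated by a \emph{proper sequence}, and acyclic outside $V(\mm)$ if and only if $X$ is locally cut out by proper sequences. There is no clean equivalence of the form ``$\Zc_\bullet$ acyclic $\iff \depth_I(Z_i)\geq i$''; the sliding-depth--type theorems that do exist require, besides depth conditions on Koszul homology or cycles, a bound on the local number of generators $\mu(I_\pp)$ at \emph{every} prime containing $I$, including the irrelevant one.

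This is precisely where your local analysis breaks down. Localizing at primes $\pp\in X$ or at primes not containing $I$ is indeed harmless (and corresponds to acyclicity off $V(\mm)$), but at the irrelevant maximal ideal $\mm$ the hypothesis ``$X$ locally defined by $n$ equations'' gives no information: there $I$ needs all $n+1$ generators while $\dim A_\mm=m$ may equal $n$ (the main case, cf.\ Corollary \ref{corAcycZToricPn}), so the local-generation hypotheses of the sliding-depth theorems fail, and your assertion that Cohen--Macaulayness plus $\codim I\geq n-1$ forces $\depth(Z_i)_\mm\geq i$ is unsubstantiated --- and even if it held, depth bounds on the cycles alone do not yield acyclicity at $\mm$. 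The paper closes exactly this gap differently: after extending $\kk$, it takes generic linear combinations $\tilde h_0,\dots,\tilde h_n$ so that $\tilde h_0,\dots,\tilde h_{n-2}$ is a regular sequence and $\tilde h_0,\dots,\tilde h_{n-1}$ cuts out $X$ in codimension $n-1$, and then proves the full sequence is proper by showing $\tilde h_n$ annihilates $H_1(\tilde h_0,\dots,\tilde h_{n-1};A)$; this uses $\tilde h_n\in J^{um}$ together with the duality $H_1\cong\ext^{n-1}_A(A/J,A)$ and the vanishing of $\ext^{n-2}_A(J^{um}/J,A)$ and $\ext^{n-1}_A(J^{um}/J,A)$ to identify $H_1$ with $\ext^{n-1}_A(A/J^{um},A)$. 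Some argument of this kind (or an honest verification of a proper/d-sequence condition at $\mm$) is what your proposal is missing; without it you have only proved acyclicity outside $V(\mm)$, which is weaker than the lemma.
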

\begin{proof}
The proof follows ideas of \cite[Lemma 2]{BC05} and \cite[Lemma 1]{BD07}. Observe that the lemma is unaffected by an extension of the base field, so one may assume that $\kk$ is infinite.

By \cite[{Theorem} 12.9]{HSV}, we know that $\Zc_\bullet$ is acyclic (resp.\ acyclic outside $V(\mm)$) if and only if $I$ is generated by a proper sequence (resp.\ $X$ is locally defined by a proper sequence). Recall that a sequence $a_1,\ldots,a_n$ of elements in a commutative ring $B$ is a \emph{proper sequence} if $a_{i+1}H_{j}(a_1,\ldots,a_i;B)=0$ for $i=0,\ldots,n-1$ and $j>0$, where the $H_j$'s denote the homology groups of the corresponding Koszul complex.

By following the same argument of \cite[{Lemma} 2]{BC05} and since $X$ is locally defined by $n$ equations, one can choose $\tilde h_0,\ldots,\tilde h_{n}$ to be sufficiently generic linear combinations of the $h_i$'s such that 
\begin{enumerate}
 \item $(\tilde h_0,\ldots,\tilde h_{n})=(h_0,\ldots,h_{n}) \subset A$,
 \item $\tilde h_0,\ldots,\tilde h_{n-2}$ is an $A$-regular sequence, hence $\tilde h_0\ldots,\tilde h_{n-1}$ is a proper sequence in $A$,
 \item $\tilde h_0,\ldots,\tilde h_{n-1}$ define $X$ in codimension $n-1$.
\end{enumerate}

Note that this last condition is slightly more general (and coincides when $m=n$) than the one in \cite[{Lemma} 2]{BC05}. Set $J:=(\tilde h_0,\dots,\tilde h_{n-1})$ and write $J^{um}$ for the unmixed part of $J$ of codimension $n-1$. Hence, observe that we obtain $\tilde h_{n} \in J^{um}$. 

\medskip

Since $\tilde h_{n}\in J^{um}$, we show that $\tilde h_{n}H_1(\tilde h_0,\dots,\tilde h_{n-1};A)=0$. Applying \cite[Thm.\ 1.6.16]{BH} to the sequence $\tilde h_0,\hdots, \tilde h_{n-1}$, we obtain that $H_1(\tilde h_0,\dots,\tilde h_{n-1};A)\cong \ext^{n-1}_A(A/J,A)$. Taking the long exact sequence of $\ext^\bullet_A(-,A)$ coming from the short exact sequence $0\to J^{um}/J\to A/J\to A/J^{um}\to 0$, we get that
\begin{equation*}
 \xymatrix@R-16pt{
  \cdots \ar[r]
   & \ext^{n-2}_A(J^{um}/J,A) \ar[r] 
   & \ext^{n-1}_A(A/J,A)\ar `[r] `[d] '[dll] *{} `[ddll] `[ddll] [ddl]
   & \\
 &&& \\
 {}
   & \ext^{n-1}_A(A/J^{um},A) \ar[r]
   & \ext^{n-1}_A(J^{um}/J,A) \ar[r]
   & \cdots
}
\end{equation*}
is exact. Since $A$ is a Cohen-Macaulay noetherian graded ring, and $J^{um}/J$ is a $m-(n-1)$-dimensional $A$-module, $\ext^{n-1}_A(J^{um}/J,A)$ and $\ext^{n-2}_A(J^{um}/J,A)$ vanish (cf.\ \cite[Thm.\ 17.1]{Mats}). Hence
\[
 \ext^{n-1}_A(A/J,A)\cong \ext^{n-1}_A(A/J^{um},A),
\]
thus, since $\tilde h_{n}\in J^{um}$, $\tilde h_{n}$ annihilates $\ext^{n-1}_A(A/J^{um},A)$, hence also $\tilde h_n$ annihilates $H_1(\tilde h_0,\hdots,\tilde h_{n-1};A)$ which finishes the proof. 
\end{proof}

We stress in the following remark one useful application of the previous Lemma \ref{ZacycALCI}.

\begin{rem}\label{remAcycZPmPn}
Let $m\geq n$ be non-negative integers. Set $\Tc$ a arithmetically Cohen-Macaulay scheme over $\kk$ embedded in some $\PP^N$ with coordinate ring $A$ of affine dimension $m$. Assume we are given a rational map $\varphi:\Tc \dto \PP^n$ given by $n+1$ homogeneous polynomials $h_0,\hdots,h_n\in A:=\kk[T_0,\hdots,T_N]/I(\Tc)$. Write $\Zc_\bullet$ for the approximation complex of cycles associated to the sequence $h_0,\hdots,h_n$. If the base locus of $\varphi$, $X\subset \Tc$, is locally defined by $n$ equations, then $\Zc_\bullet$ is acyclic, independent of $m$ and $N$.
\end{rem}

We translate Lemma \ref{ZacycALCI} geometrically.

\begin{cor}\label{corAcycZToricPn}
Assume $m = n$ be a non-negative integer. Let $\Tc$ be an $(n-1)$-dimensional CM closed subscheme of $\PP^N$ defined by a homogeneous ideal $J$, and coordinate ring $A=\kk[\T]/J$. Assume we are given a rational map $\varphi:\Tc \dto \PP^n$ given by $n+1$ homogeneous polynomials $h_0,\hdots,h_n\in A$ of degree $d$. Write $\Zc_\bullet$ for the approximation complex of cycles associated to the sequence $h_0,\hdots,h_n$. If the base locus of $\varphi$, $X\subset \Tc$, is finite, and locally an almost complete intersection (defined by $n$ equations), then $\Zc_\bullet$ is acyclic.
\end{cor}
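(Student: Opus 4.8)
This corollary is the geometric restatement of Lemma \ref{ZacycALCI} in the case $m = n$, so the plan is simply to check that every hypothesis of that lemma is satisfied here. The data of the corollary already provides that $A$ is an $n$-dimensional graded Cohen--Macaulay ring, that $\deg(h_i) = d$ for all $i$, and that $X = \Proj(A/I)$ with $I = (h_0,\dots,h_n) \subset A$ is locally an almost complete intersection, i.e.\ locally defined by $n$ equations. Since $A$ is Cohen--Macaulay, the codimension and the $I$-depth of $A$ agree, so the only numerical condition left to verify is $\codim_A I \ge n-1$.

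First I would establish this codimension bound by a dimension count. Because $\Tc = \Proj A$ has dimension $n-1$ and $A$ is Cohen--Macaulay, $A$ is equidimensional and catenary with $\dim A = n$, so that $\codim_A I + \dim(A/I) = n$. The assumption that the base locus $X$ is finite means $\dim \Proj(A/I) \le 0$; hence either $X = \emptyset$ and $A/I$ is $\mm$-primary with $\dim(A/I) = 0$, or $\dim \Proj(A/I) = 0$ and $\dim(A/I) = 1$. In either case $\dim(A/I) \le 1$, whence $\codim_A I \ge n-1$. With this in hand, all hypotheses of Lemma \ref{ZacycALCI} hold with $m = n$, and its conclusion gives that $\Zc_\bullet$ is acyclic.

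The only point that needs a little care is the bookkeeping relating finiteness of $\Proj(A/I)$ to the Krull dimension of the graded ring $A/I$, where one must keep track of the possible irrelevant component; and one should observe that the base-point-free case $X = \emptyset$ is harmless, since there $I$ is $\mm$-primary, hence of codimension $n \ge n-1$, and $X$ is vacuously a local almost complete intersection. Beyond this, there is no genuine obstacle: the corollary contains no content beyond Lemma \ref{ZacycALCI}, of which it is merely the geometric translation.
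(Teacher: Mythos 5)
Your proposal is correct and matches the paper's intent exactly: the paper offers no separate argument for this corollary beyond declaring it the geometric translation of Lemma \ref{ZacycALCI}, and your verification that finiteness of $X=\Proj(A/I)$ forces $\dim(A/I)\leq 1$, hence $\codim_A(I)\geq n-1$ in the $n$-dimensional Cohen--Macaulay ring $A$, is precisely the translation being invoked. The care you take with the irrelevant component and the empty base locus case is a welcome (if routine) filling-in of details the paper leaves implicit.
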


The following result establishes a vanishing criterion on the graded strands of the local cohomology of $\Sym_A(I)$, which ensures that the implicit equation can be obtained as a generator of the annihilator of the symmetric algebra in that degree.

Since $A$ is a finitely generated graded Cohen Macaulay $A$-module of dimension $n$, $H^i_\mm(A)=0$ for all $i \neq n$ and $H^n_\mm(A) =\omega_A^\vee$, where $(-)^\vee:=\ ^*\hom_A(-,\kk)$ stands for the Matlis dualizing functor (cf.\ \cite{BH}). Write $a_i(M):=\inf \{\mu\ : (H^i_\mm(M))_{>\mu}=0\}$. Hence, we set $\gamma:= a_n(A)=\inf\{\mu\ :\ (\omega_A^\vee)_\mu=0\}$, and we conclude the following result.

\begin{thm}\label{annih} 
Let $A=\kk[\T]/J$ be a CM graded ring of dimension $n$. Let $I=(h_0,\hdots,h_n)$ be a homogeneous ideal of $A$, with $\deg(h_i)=d$ for all $i$. Let $X:=\Proj(A/I) \subset \Tc$ be finite and locally an almost complete intersection. Set $\nu_0 := \max\{(n-2)d, (n-1)d-\gamma\}$, then $H^0_\mathfrak{m} ( \Sym_A(I) )_\nu =0$ for all $\nu\geq \nu_0$.
\end{thm}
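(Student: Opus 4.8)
The plan is to exploit the acyclicity of the approximation complex $\Zc_\bullet$ (available here by Corollary \ref{corAcycZToricPn}, since $X$ is finite and locally an almost complete intersection) together with a spectral sequence argument comparing $\Zc_\bullet$ with the Koszul complex $K_\bullet(\h,A)$. First I would recall that $H_0(\Zc_\bullet)=\Sym_A(I)$ and that, because $\Zc_\bullet$ is acyclic, the two hypercohomology spectral sequences of the double complex $C^\bullet_\mm(\Zc_\bullet)$ (local cohomology applied termwise) can be compared: one converges to $H^\bullet_\mm(\Sym_A(I))$ with a single column on the $E_1$ page in homological degree $0$, while the other has $E_1$-terms $H^i_\mm(\Zc_p)$. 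So the vanishing of $H^0_\mm(\Sym_A(I))_\nu$ will follow once I control, in degree $\nu$, the modules $H^i_\mm(\Zc_p)$ for the relevant range of $(i,p)$ that can contribute to total degree $0$ — concretely the entries $H^i_\mm(\Zc_p)_\nu$ with $i-p \le 0$, i.e. $i \le p$.

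The key computational input is the degree estimate on $H^i_\mm(\Zc_p)$. Since $\Zc_p = Z_p[pd]\otimes_A A[\X]$ and $Z_p \subset K_p = \bigwedge^p A^{n+1}[-pd]$ sits in the exact sequence $0\to Z_p\to K_p\to B_{p-1}\to 0$, I would bound $a_i(Z_p)$ in terms of $a_i(A)$ and $a_i(B_{p-1})$, and then push $B_{p-1}$ down through the Koszul complex using the exact sequences $0\to B_q\to Z_q\to H_q(\h;A)\to 0$. Because $I$ has codimension (hence depth) at least $n-1$ and $X$ is locally an almost complete intersection, the Koszul homology $H_q(\h;A)$ is supported on the finite set $X$ for $q\ge 1$, so its local cohomology is concentrated in degree $0$ and its $a_0$ is finite; combined with $a_n(A)=\gamma$ and $a_i(A)=0$ for $i<n$, a bookkeeping through these short exact sequences yields bounds of the shape $a_i(Z_p) \le \max\{\,\cdot\,, (n-1)d-\gamma + (\text{shifts})\,\}$. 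Tensoring with the polynomial ring $A[\X]$ in the $X_i$ (which only adds a free polynomial variable set and does not change the $a_i$ relative to the $A$-grading, merely tensors the cohomology with $\kk[\X]$) and incorporating the internal shift $(-p)$ in the $X$-grading, I would collect all contributions to total homological degree $0$ and check that each forces the degree-$\nu$ piece to vanish for $\nu \ge \nu_0 = \max\{(n-2)d,\ (n-1)d-\gamma\}$.

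The main obstacle I anticipate is the careful propagation of the $a_i$-invariants through the two families of short exact sequences linking $Z_p$, $B_p$, $K_p$ and $H_p(\h;A)$: one must track both the Koszul shifts $[-pd]$ and the homological-index shifts $(-p)$ in the $X$-grading simultaneously, and verify that the ``bad'' terms (those involving $H^n_\mm(A)=\omega_A^\vee$, contributing the $\gamma$) land exactly where the bound $(n-1)d-\gamma$ absorbs them, while the Koszul-homology terms supported on $X$ contribute only to $H^0_\mm$ and get absorbed by the $(n-2)d$ bound. A secondary technical point is to make sure the depth hypothesis $\depth I \ge n-1$ is enough (rather than the usual ``$X$ a local complete intersection of codimension $n$'' hypothesis in the cited references) to guarantee the required vanishing of $H^i_\mm(H_q(\h;A))$ for $i>0$; this is exactly where the finiteness of $X$ and the almost-complete-intersection condition enter, as they force all higher Koszul homologies to be finite-length $A$-modules. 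Once these estimates are in place the conclusion is immediate from the spectral sequence.
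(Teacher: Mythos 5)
Your overall strategy coincides with the paper's: acyclicity of $\Zc_\bullet$, the two spectral sequences of $C^\bullet_\mm(\Zc_\bullet)$, and the reduction of the vanishing of $H^0_\mm(\Sym_A(I))_\nu$ to bounding $\endd(H^p_\mm(Z_p))-pd$ along the diagonal. The gap is in how you propose to obtain those bounds. Your key supporting claim is false: for $q\ge 1$ the Koszul homology $H_q(\h;A)$ is supported on the \emph{affine cone} over $X$, which is one-dimensional whenever $X\neq\emptyset$, not on $\{\mm\}$; these modules are not of finite length and $H^1_\mm(H_q(\h;A))$ need not vanish. What the hypotheses actually give is only $H^i_\mm(H_q(\h;A))=0$ for $i\ge 2$. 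Moreover, the direction of your chase makes this fatal: pushing $B_{p-1}$ \emph{downwards} through the complex via $0\to B_q\to Z_q\to H_q(\h;A)\to 0$ produces, on the way from $Z_p$ to $Z_0=A$, the terms $H^0_\mm(H_1)$ and $H^1_\mm(H_2)$, and your argument supplies no numerical bound on their ends --- ``finite'' is not enough, you would need explicit bounds in terms of $d$, $n$ and $\gamma$, and obtaining such a bound for $\endd H^0_\mm(H_1)$ is an additional nontrivial problem, not a consequence of anything you state.

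The cure, which is exactly the content of the reference the paper invokes (\cite[Cor.\ 6.2.v]{Ch04}), is to chase in the opposite direction, toward the end of the Koszul complex. For $p\ge 2$ one has $H^p_\mm(H_p(\h;A))=0$ because $p>\dim(A/I)$, so $0\to B_p\to Z_p\to H_p(\h;A)\to 0$ gives $\endd H^p_\mm(Z_p)\le \endd H^p_\mm(B_p)$, and $0\to Z_{p+1}\to K_{p+1}\to B_p\to 0$ gives $\endd H^p_\mm(B_p)\le\max\{\endd H^p_\mm(K_{p+1}),\endd H^{p+1}_\mm(Z_{p+1})\}$; iterating upwards until $Z_{n+1}=0$, only the free modules $K_q$ contribute, through $a_n(A)=-\gamma$, and one gets $\epsilon_p=\endd H^p_\mm(Z_p)-pd\le\max\{(n-p)d,(n+1-p)d-\gamma\}\le\nu_0$ for all $p\ge 2$, with no Koszul-homology term ever entering. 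The cases $p=0,1$ are not covered by this vanishing and must be treated directly, as the paper does: $H^0_\mm(Z_0)=H^0_\mm(A)=0$, and $H^1_\mm(Z_1)=0$ from $0\to Z_1\to K_1\to I\to 0$ using $H^0_\mm(I)=0$ and $H^1_\mm(K_1)=0$. With the direction of the chase reversed and these two low cases handled separately, your outline becomes the paper's proof; as written, the degree estimate at the heart of the argument is not established.
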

\begin{proof}
For the bound on $\nu$, consider the two spectral sequences associated to the double complex $C^\bullet_\mm(\Zc_\bullet)$, both converging to the hypercohomology of $\Zc_\bullet$. The first spectral sequence stabilizes at step two with
\[
_\infty'E^p_q =\ _2'E^p_q = H^p_\mm(H_q(\Zc_\bullet)) = \left\lbrace\begin{array}{ll}H^p_\mm(\SIA) & \mbox{for }q=0, \\
0 & \mbox{otherwise.} \end{array}\right.
\]
The second has first terms $_1''E^p_q =\ H^p_\mm(Z_q)[qd]\otimes_A A[\X](-q)$. The comparison of the two spectral sequences shows that $H^0_\mm(\Sym_A(I))_\nu$ vanishes as soon as $(_1{''}E^{p}_p)_\nu$ vanishes for all $p$, in fact we have that 
\[
 \endd(H^0_\mm(\Sym_A(I)))\leq \max_{p\geq 0}\{\endd(_1{''}E^{p}_p)\}=\max_{p\geq 0}\{\endd(H^p_\mm(Z_p))-pd\},
\]
where we denote, for an $A$-module $M$, $\endd(M)= \max \{ \nu \ | \ M_\nu \neq 0 \}$.
Since $Z_0\cong A$ we get $H^0_\mm(Z_0)=0$. The sequence $ 0 \to Z_{i+1} \to K_{i+1} \to B_i \to 0$ is graded exact (cf.\ Equation \eqref{sesZKB}), hence, from the long exact sequence of local cohomology for $i=0$ (writing $B_0=I$) we obtain
\[
  \cdots \to H^0_\mm(I) \to H^1_\mm(Z_{1}) \to H^1_\mm(K_{1}) \to \cdots .
\]
As $I$ is an ideal of an integral domain, $H^0_\mm(I)=0$, it follows from the local cohomology of $A$ that $H^1_\mm(K_{1})=0$, hence $H^1_\mm(Z_{1})$ vanishes.
By construction, $Z_{n+1}=0$ and that $B_{n}=\im(d_{n})\simeq A[-d]$. Using the fact that $H^{n}_\mathfrak{m}(A)_\nu =0$ for $\nu \geq -1$ (resp.\ $\nu \geq 0$), we can deduce that $H^{n}_\mathfrak{m}(Z_{n})_\nu=H^{n}_\mathfrak{m}(B_{n})_\nu = (\omega_A^\vee)[d]= 0$ if $\nu \geq d-\gamma$.
Write 
\[
 \epsilon_p:= \endd(_1{''}E^{p}_p)=\endd(H^p_\mm(Z_p))-pd 
\]
By \cite[Cor.\ 6.2.v]{Ch04} $\endd(H^p_\mm(Z_p))\leq  \max_{0\leq i\leq n-p} \{a_{p+i}(A) + (p+i+1)d\}=\max\{nd, (n+1)d-\gamma\}$, where $\gamma:=-a_{n}(A)$ as above. Hence, $ \epsilon_p:= \max\{(n-p)d, (n+1-p)d-\gamma\}$. As $\epsilon_p$ decreases when $p$ increases, $ \epsilon_p\leq \epsilon_2= \max\{(n-2)d, (n-1)d-\gamma\}$ which completes the proof.
\end{proof} 

This generalizes \cite{BuJo03}, \cite{BCJ06} and \cite{BDD08} to general $(n-1)$-dimensional arithmetically Cohen-Macaulay schemes. Next, we recall how the homological tools developed in this part are applied for computing the implicit equation of the closed image of a rational map.

\subsection{The representation matrix, the implicit equation, and the extraneous factor}
It is well known that the annihilator above can be computed as the determinant (or MacRae invariant) of the complex $(\Zc_\bullet)_{\nu_0}$ (cf.\ \cite{BuJo03}, \cite{BCJ06}, \cite{Bot08}, \cite{BDD08} et al.). Hence, the determinant of the complex $(\Zc_\bullet)_{\nu_0}$ is a multiple of a power of the implicit equation of $\Sc$. Indeed, we conclude the following result.

\begin{lem}\label{lemAnnih} 
Let $\Tc$ be an $(n-1)$-dimensional CM closed subscheme of $\PP^N$ defined by a homogeneous ideal $J$, and coordinate ring $A=\kk[\T]/J$. Let $I=(h_0,\hdots,h_n)$ be a homogeneous ideal of $A$, with $\deg(h_i)=d$ for all $i$. Let $X:=\Proj(A/I) \subset \Tc$ be finite and locally an almost complete intersection. Set $\nu_0 := \max\{(n-2)d, (n-1)d-\gamma\}$, then $H^0_\mathfrak{m} ( \Sym_A(I) )_\nu =0$ for all $\nu\geq \nu_0$, and $\ann_{\kk[\X]} ( \Sym_A(I)_\nu )\subset \ker(\varphi^\ast)$, for all $\nu \geq \nu_0$.
\end{lem}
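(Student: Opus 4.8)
The first assertion of the lemma is precisely Theorem~\ref{annih}, so the plan is only to establish the inclusion $\ann_{\kk[\X]}(\Sym_A(I)_\nu)\subseteq\ker(\varphi^\ast)$, and I would deduce it from the canonical map to the Rees algebra. First I would invoke the standard surjection of bigraded $A$-algebras $\alpha\colon\Sym_A(I)\twoheadrightarrow\Rees_A(I)=\bigoplus_{k\ge 0}I^kt^k\subseteq A[t]$, $X_i\mapsto h_it$. This map is $\kk[\X]$-linear for the module structures coming from $\varphi^\ast\colon\kk[\X]\to\Sym_A(I)$ and from $\kk[\X]\to\Rees_A(I)$, and — once $t$ is assigned internal $\T$-degree $-d$, so that $h_it$ has $\T$-degree $0$ like the $X_i$ inside $A[\X]$ — it is homogeneous for the grading induced by $A$. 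Passing to the degree-$\nu$ strand then yields a surjection of graded $\kk[\X]$-modules $\Sym_A(I)_\nu\twoheadrightarrow\Rees_A(I)_\nu$, whence $\ann_{\kk[\X]}(\Sym_A(I)_\nu)\subseteq\ann_{\kk[\X]}(\Rees_A(I)_\nu)$.

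It then suffices to prove $\ann_{\kk[\X]}(\Rees_A(I)_\nu)\subseteq\ker(\varphi^\ast)$. Both ideals are homogeneous in $\kk[\X]$, so I would take $P\in\kk[\X]$ homogeneous of degree $e$ with $P\cdot\Rees_A(I)_\nu=0$. Since $\nu\ge\nu_0\ge (n-2)d\ge 0$ and $A$ is a positively graded domain of dimension $n\ge 1$, there is a nonzero $a\in A_\nu$, which sits in the $k=0$ summand of $\Rees_A(I)_\nu$. Computing in $A[t]$ one gets $0=P\cdot a=\varphi^\ast(P)\,a\,t^e$, and since $A$ is a domain and $a\neq 0$ this forces $\varphi^\ast(P)=0$, i.e.\ $P\in\ker(\varphi^\ast)$. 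Finally, an arbitrary element of $\ann_{\kk[\X]}(\Sym_A(I)_\nu)$ decomposes into $\X$-homogeneous summands, each lying in $\ker(\varphi^\ast)$ by the preceding two steps; as $\ker(\varphi^\ast)$ is homogeneous, the inclusion follows.

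I do not expect a real obstacle here; the one point demanding care is the bookkeeping of the two gradings, namely stating the surjection $\Sym_A(I)\to\Rees_A(I)$ with the correct internal shift so that its $\nu$-strand is $\kk[\X]$-linear and degree-preserving, and observing that the chosen $a$ is a nonzerodivisor (automatic since $A$ is a domain). It is worth stressing what the inclusion buys: together with the acyclicity of $\Zc_\bullet$ (Corollary~\ref{corAcycZToricPn}) and the vanishing $H^0_\mm(\Sym_A(I))_\nu=0$ for $\nu\ge\nu_0$ — which makes $\Sym_A(I)_\nu$ a torsion $\kk[\X]$-module with a well-defined MacRae determinant — the inclusion $\ann_{\kk[\X]}(\Sym_A(I)_\nu)\subseteq\ker(\varphi^\ast)=(H)$ guarantees that $(H)$ lies in the support of $\Sym_A(I)_\nu$, hence that $\det((\Zc_\bullet)_\nu)$ is a nonzero multiple of a power of the implicit equation $H$; the possible strictness of this inclusion is precisely what accounts for the extraneous factor analysed in the next subsection.
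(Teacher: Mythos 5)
Your proof is correct and follows essentially the paper's route: the first claim is exactly Theorem~\ref{annih}, and the inclusion $\ann_{\kk[\X]}(\Sym_A(I)_\nu)\subset\ker(\varphi^\ast)$ is handled in the paper by citing \cite[Lemma 2]{BD07}, whose argument is precisely the one you write out (the surjection $\Sym_A(I)\twoheadrightarrow\Rees_A(I)\subset A[t]$, a nonzero element of $A_\nu$, and the fact that $A$ is a domain). Your bookkeeping of the two gradings (assigning $t$ internal degree $-d$) and the reduction to $\X$-homogeneous elements of the annihilator are both sound, so there is nothing to add.
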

\begin{proof}
 The first part follows from \ref{annih}. The proof of the second part can be taken verbatim from \cite[Lemma 2]{BD07}.
\end{proof}

\begin{cor}\label{mainthT}
Let $\Tc$ be an $(n-1)$-dimensional CM closed subscheme of $\PP^N$ defined by a homogeneous ideal $J$, and coordinate ring $A=\kk[\T]/J$. Let $I=(h_0,\hdots,h_n)$ be an homogeneous ideal of $A$, with $\deg(h_i)=d$ for all $i$. Let $X:=\Proj(A/I) \subset \Tc$ be finite and locally almost a complete intersection. Let $\nu_0$ be as above. For any integer $\nu \geq \nu_0$ the determinant $D$ of the complex $(\Zc_\bullet)_\nu$ of $\kk[\X]$-modules defines (up to multiplication with a constant) the same non-zero element in $\kk[\X]$. Moreover, $D=F^{\deg(\varphi)}G$, where $F$ is the implicit equation of $\Sc$.
\end{cor}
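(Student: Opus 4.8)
The plan is to follow the strategy used in \cite{BuJo03}, \cite{BCJ06}, \cite{BDD08} and \cite{Bot08}: realise $D$ as the determinant (MacRae invariant) of the acyclic complex $(\Zc_\bullet)_\nu$ and then read off its divisor from the $\kk[\X]$-module $\Sym_A(I)_\nu$ that it resolves, using Lemma \ref{lemAnnih} to locate $F$ inside that divisor and a comparison with the Rees algebra to pin down its multiplicity.

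First I would fix $\nu\geq\nu_0$ and take the strand $(\Zc_\bullet)_\nu$ with respect to the grading of $A$. Each term $(\Zc_i)_\nu=(Z_i[i\cdot d])_\nu\otimes_\kk\kk[\X]$ is a finitely generated free graded $\kk[\X]$-module, so $(\Zc_\bullet)_\nu$ is a bounded complex of finite free graded $\kk[\X]$-modules. Since $X=\Proj(A/I)$ is finite and locally an almost complete intersection, Corollary \ref{corAcycZToricPn} gives that $\Zc_\bullet$ is acyclic, and applying the exact functor $(-)_\nu$ yields $H_i((\Zc_\bullet)_\nu)=0$ for $i>0$ and $H_0((\Zc_\bullet)_\nu)=(\Sym_A(I))_\nu=:M_\nu$. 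Next I would check that $M_\nu$ is a torsion $\kk[\X]$-module: since $\Sc=\overline{\im(\varphi)}$ is a hypersurface, $\dim\Sc=n-1$, and since $X$ is finite with $I$ locally generated by at most $n$ elements, $\Mproj(\Sym_A(I))\subset\Tc\times\PP^n$ has dimension $\leq n-1$, hence its image under $\pi_2$ has dimension $\leq n-1$; for $\nu\geq\nu_0$, where $H^0_\mm(\Sym_A(I))_\nu=0$ by Theorem \ref{annih} (the ``saturated'' range), the support of $M_\nu$ in $\PP^n$ lies inside that image, so $\codim_{\kk[\X]}M_\nu\geq1$. Therefore $(\Zc_\bullet)_\nu$ is a finite free resolution of the torsion module $M_\nu$, its determinant $D:=\det((\Zc_\bullet)_\nu)$ is a well-defined nonzero homogeneous element of $\kk[\X]$ up to a nonzero scalar, and the divisor formula for determinants of complexes gives $\div(D)=\sum_{\mathrm{ht}\,\pp=1}\length_{\kk[\X]_\pp}\big((M_\nu)_\pp\big)\,[\pp]$.

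To identify the two factors, I would invoke Lemma \ref{lemAnnih}: the inclusion $\ann_{\kk[\X]}(M_\nu)\subseteq\ker(\varphi^\ast)=(F)$ forces $\Sc=V(F)\subseteq\Supp(M_\nu)$, so $(F)$ appears in $\div(D)$ with some multiplicity $e\geq1$; collecting the contributions of the remaining height-one primes into a polynomial $G$ gives $D=F^{e}G$. To see $e=\deg(\varphi)$ I would localise at the generic point $\eta$ of $\Sc$: since $\Sc=\overline{\varphi(\Omega)}$, the point $\eta$ lies over $\Omega=\Tc\setminus X$, where $I$ is invertible and $\Sym_A(I)$ agrees with $\Rees_A(I)$ (the components of $\Mproj(\Sym_A(I))$ sitting over the finite set $X$ map onto proper subvarieties and so only feed $G$); then $e=\length_{\kk[\X]_{(F)}}\big((\Rees_A(I)_\nu)_{(F)}\big)=\deg(\varphi)$, because $\overline{\Gamma_\Omega}=\Mproj(\Rees_A(I))$ is birational to $\Tc$ via $\pi_1$ (it is the blow-up of $\Tc$ along $X$) and generically $\deg(\varphi)$-to-one onto $\Sc$ via $\pi_2$. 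Hence $D=F^{\deg(\varphi)}G$. For the independence of $\nu$, I would observe that $\div(D)=\div_{\kk[\X]}(M_\nu)$ and that, by Theorem \ref{annih}, $H^0_\mm(\Sym_A(I))_\nu=0$ for every $\nu\geq\nu_0$, so all these $M_\nu$ are strands of the single module $\Sym_A(I)$ in its saturated range; hence the divisor, and therefore $D$ up to a nonzero constant, does not depend on $\nu\geq\nu_0$.

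The hard part will be the torsion/multiplicity analysis in the middle step: guaranteeing that $M_\nu$ is genuinely torsion (so that $D\neq0$) and that $F$ occurs in $D$ with multiplicity exactly $\deg(\varphi)$ and not more. This is precisely where one has to compare $\Sym_A(I)$ with the Rees algebra off the base locus and argue that the finitely many extra components of $\Mproj(\Sym_A(I))$ lying over $X$ only contribute to the extraneous factor $G$; everything else — the acyclicity and the formal properties of the determinant of a complex — is already supplied by Corollary \ref{corAcycZToricPn}, Lemma \ref{lemAnnih} and the works cited above.
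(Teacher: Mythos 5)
Your proposal follows essentially the same route as the paper: the paper's proof of this corollary is just the assembly of Lemma \ref{ZacycALCI} (acyclicity), Lemma \ref{lemAnnih} (vanishing of $H^0_\mm$ and the annihilator inclusion) and then a verbatim appeal to the argument of \cite[Thm.\ 5.2]{BuJo03}, which is exactly the MacRae-invariant/divisor computation and Rees-algebra comparison you spell out. Your sketch is in fact more detailed than the paper's own proof; the only point stated loosely (the independence of the divisor of $\Sym_A(I)_\nu$ from $\nu$ in the saturated range) is precisely the part handled inside the cited proof of \cite{BuJo03}, so there is no substantive divergence.
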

\begin{proof}
It follows from Lemma \ref{ZacycALCI}, Lemma \ref{lemAnnih}, and Theorem \ref{locosymalgT}, by following the same lines of the proof of \cite[Thm.\ 5.2]{BuJo03}.
\end{proof}

By \cite[Appendix A]{GKZ94}, the determinant $D$ can be computed either as an alternating product of subdeterminants of the differentials in $(\Zc_\bullet)_{\nu}$ or as the greatest common divisor of the maximal-size minors of the matrix $M$ associated to the
right-most map $(\Zc_1)_{\nu} \rightarrow  (\Zc_0)_{\nu}$ of the $\Zc$-complex (cf.\ Definition \ref{defZcomplex}). Note that this matrix is nothing else than the matrix $M_\nu$ of linear syzygies as described in the introduction; it can be computed with the same algorithm as in \cite{BD07} or \cite{BDD08}. Hence, if $\Tc  \stackrel{\varphi}{\dashrightarrow} \PP^n$ is as in Corollary \ref{mainthT}, the matrix $M_\nu$ of linear syzygies of $h_0,\ldots,h_n$ in degree $\nu \geq \nu_0$ is a representation matrix for the closed image of $\varphi$.

As was done by Bus\'e et al.\ in \cite[Sec.\ 2]{BCJ06}, we conclude that the the extraneous factor $G$ can be described in terms of linear forms.

\begin{prop}\label{extraFactorThm}
If the field $\kk$ is algebraically closed and $X$ is locally generated by at most $n$ elements then, there exist linear forms $L_x\in \kk[\X]$, and integers $e_x$ and $d_x$ such that
\[
 G=\prod_{x\in X} L_x^{e_x-d_x}\in \kk[\X].
\]
Moreover, if we identify $x$ with the prime ideal in $\Spec(A)$ defining the point $x$, $e_x$ is the Hilbert-Samuel multiplicity $e(I_x, A_x)$, and $d_x:= \dim_{A_x/xA_x}(A_x/I_x)$.
\end{prop}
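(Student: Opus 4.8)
The plan is to follow the argument of \cite[\S 2]{BCJ06}, adapted to the present generality. By Lemma \ref{ZacycALCI} the complex $(\Zc_\bullet)_\nu$ is acyclic and consists of finite free $\kk[\X]$-modules, hence it is a free resolution of the $\kk[\X]$-module $M_\nu := (\Sym_A(I))_\nu = H_0((\Zc_\bullet)_\nu)$, which for $\nu \geq \nu_0$ is supported in codimension one in $\Spec(\kk[\X])$. By the standard description of the determinant of a complex (\cite[Appendix A]{GKZ94}), $D = \det((\Zc_\bullet)_\nu)$ is, up to a nonzero scalar, the divisorial content (MacRae invariant) of $M_\nu$, i.e.\ $D = \prod_\pp \pp^{\,\ell_\pp}$, where $\pp$ runs over the height-one primes of $\kk[\X]$ and $\ell_\pp = \length_{\kk[\X]_\pp}((M_\nu)_\pp)$. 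By Corollary \ref{mainthT} the prime $(F)$ contributes the factor $F^{\deg(\varphi)}$, so it remains to identify the other height-one primes of $\kk[\X]$ in $\Supp(M_\nu)$ and to show their contribution is $\prod_{x\in X} L_x^{\,e_x-d_x}$.

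First I would describe the fibers of the incidence scheme $\Biproj(\Sym_A(I)) \subset \Tc \times \PP^n$ over the points of $X$. Since the formation of $\Sym$ commutes with base change, the fiber over $x \in X$ is $\Proj(\Sym_A(I)\otimes_A\kappa(x)) = \Proj(\Sym_{\kappa(x)}(I_x/\mm_xI_x))$; as $\kk$ is algebraically closed, $\kappa(x) = \kk$ and $I_x/\mm_xI_x \cong \kk^{\mu(I_x)}$, so this fiber is a linearly embedded $\PP^{\mu(I_x)-1}$, that is, a linear subspace $\Lambda_x\subseteq\PP^n$ cut out by the reduction modulo $\mm_x$ of a presentation matrix of $I_x$. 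Since $X$ is finite and $\Tc$ is Cohen--Macaulay, $I_x$ is $\mm_x$-primary of codimension $n-1$ in the $(n-1)$-dimensional CM local ring $A_x$; together with the hypothesis that $I_x$ is generated by at most $n$ elements this forces $\mu(I_x)\in\{n-1,n\}$. If $\mu(I_x) = n-1$, then $I_x$ is a parameter ideal, hence a complete intersection in the CM ring $A_x$, so $e_x = e(I_x,A_x) = \length_{A_x}(A_x/I_x) = d_x$, while $\Lambda_x$ has codimension $\geq 2$ and contributes no height-one prime; such $x$ give the trivial factor $L_x^{\,0}$ for any $L_x$. If $\mu(I_x) = n$, then $\Lambda_x$ is a hyperplane, say $\Lambda_x = V(L_x)$ with $L_x\in\kk[\X]$ a linear form, unique up to a scalar. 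Therefore the height-one primes of $\kk[\X]$ lying in $\Supp(M_\nu)$ are $(F)$ and the $(L_x)$ with $\mu(I_x) = n$, and $G = \prod_{x\in X} L_x^{\,\ell_{(L_x)}}$; it remains to prove $\ell_{(L_x)} = e_x - d_x$ when $\mu(I_x) = n$.

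For this identity I would localize at $x$. On $\Omega = \Tc\setminus X$ the ideal $I$ is the unit ideal, so the kernel $T$ of the canonical surjection $\Sym_A(I)\twoheadrightarrow\Rees_A(I)$ is supported over $X$. Moreover $\Biproj(\Rees_A(I))$ is irreducible, dominates $\Tc$, and maps onto $\overline{\im(\varphi)} = V(F)$ under $\pi_2$, so for $\nu\geq\nu_0$ the $\kk[\X]$-module $(\Rees_A(I))_\nu$ is supported on $V(F)$; localizing the short exact sequence $0\to T\to\Sym_A(I)\to\Rees_A(I)\to 0$ in $A$-degree $\nu$ at a prime $(L_x)\neq(F)$ then identifies $(M_\nu)_{(L_x)}$ with the localization at $(L_x)$ of the part of $T_\nu$ supported on $V(L_x)$, i.e.\ with a sum, over the points $x'\in X$ having $\Lambda_{x'} = V(L_x)$, of the (localized) local torsion modules $T_{x'} = \ker(\Sym_{A_{x'}}(I_{x'})\to\Rees_{A_{x'}}(I_{x'}))$ in degree $\nu$. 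It then remains to show that, for $\nu\geq\nu_0$, this length is independent of $\nu$ and the contribution of each such $x'$ equals $e(I_{x'},A_{x'}) - \length_{A_{x'}}(A_{x'}/I_{x'}) = e_{x'} - d_{x'}$: the $\nu$-independence is a consequence of the vanishing $H^0_\mm(\Sym_A(I))_\nu = 0$ of Theorem \ref{annih}, and the value $e_{x'} - d_{x'}$ is obtained from a computation with the associated graded ring $\gr_{I_{x'}}(A_{x'})$ of the $\mm_{x'}$-primary almost complete intersection $I_{x'}$ in the CM local ring $A_{x'}$, exactly as in \cite[\S 2]{BCJ06}.

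I expect the main obstacle to be this final local step: making rigorous the passage from the global graded $\kk[\X]$-module $(\Sym_A(I))_\nu$ to the local data $I_x\subset A_x$, so that the relevant multiplicity is computed entirely in $A_x$ and is $\nu$-independent for $\nu\geq\nu_0$, and then establishing the multiplicity identity $\ell_{(L_x)} = e_x - d_x$ for a local almost complete intersection. By contrast, the MacRae-invariant description of $D$ and the identification of the fibers $\Lambda_x$ as linear subspaces of $\PP^n$ are formal and use nothing beyond \cite{BuJo03} and \cite{BCJ06}.
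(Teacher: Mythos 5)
Your proposal is correct and follows essentially the same route as the paper, whose proof simply invokes \cite[Prop.\ 5]{BCJ06} together with the observation that \cite[Lemma 6]{BCJ06} is stated for Cohen--Macaulay local rings and so applies to $A_x$ here. The ``main obstacle'' you single out --- the local identity $\ell_{(L_x)}=e_x-d_x$ for an $\mm_x$-primary almost complete intersection in the CM local ring $A_x$ --- is exactly the content of that cited lemma, so your outline is a faithful unwinding of the argument the paper delegates to \cite{BCJ06}.
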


\begin{proof}
 The proof goes along the same lines of \cite[Prop.\ 5]{BCJ06}, just observe that \cite[Lemma 6]{BCJ06} is stated for a Cohen-Macaulay ring as is $A$ in our case.
\end{proof}


\section{The case $\KKK=(\P1)^n$}\label{sec4multiproj}

The aim of this section is computing the implicit equation of a hypersurface in $(\PP^1)^n$. 
 There is also a big spectrum of problems that are not well adapted to taking a common denominator. Typically this process enlarges the base locus of $f$, which could imply having a worse compactification of the domain or an embedding into a bigger projective space. This may also increase considerably the degree of the new maps $f_i$, hence the degree of $f$ which is harmful for the algorithmic approach.

Here we generalize the work in \cite{Bot08}. Hereafter in this section, let $\Tc$ be a $(n-1)$-dimensional projective arithmetically Cohen Macaulay closed scheme over a field $\kk$, embedded in $\PP^N_{\kk}$, for some $N\in \NN$. Write $A=\kk[T_0,\hdots,T_N]/J$ for its CM graded coordinate ring, and let $J$ denote the homogeneous defining ideal of $\Tc$. Set $\T:= T_0,\hdots,T_N$ the variables in $\PP^N$, and $\X$ the sequence $X_1,Y_1,\hdots,X_n,Y_n$, of variables in $(\PP^1)^n$. Write $\mm:=A_+=(\T)\subset A$ for the maximal irrelevant homogeneous ideal of $A$.

Let $\phi$ be a finite map over a relative open set $U$ of $\Tc$ defining a hypersurface in $\PP^n$:
\begin{equation}\label{eqSettingP1n}
 \PP^N \supset \Tc \nto{\phi}{\dto} (\PP^1)^n : \T \mapsto (f_1:g_1)\times\cdots\times(f_n:g_n)(\T),
\end{equation}
where $f_i$ and $g_i$ are homogeneous elements of $A$ of degree $d_i$, for $i=1,\hdots,n$. As in the section before, this map $\phi$ gives rise to a morphism of graded $\kk$-algebras in the opposite sense
\begin{equation}
 \kk[\X] \nto{\phi^\ast}{\lto} A: X_i \mapsto f_i(\T), Y_i \mapsto g_i(\T).
\end{equation}
Since $\ker(\phi^\ast)$ is a principal ideal in $\kk[\X]$, write $H$ for a generator. We proceed as in \cite{Bot08} to get a matrix such that the gcd of its maximal minors equals $H^{\deg(\phi)}$, or a multiple of it.

\medskip

Assume that we are given a rational map like the one in \eqref{initsetting} with $\deg(f_i)=\deg(g_i)=d_i$, $i=1,\hdots,n$. Take $\Tc\subset \PP^N$ the toric embedding obtained from $\Nc'(f)$ cf.\ Definition \ref{defNf}. The multi-projective compactification is given by
\begin{equation}\label{RemToricCaseP1n}
 \AA^n \nto{\iota}{\hto}(\PP^1)^n: (x_1,\hdots,x_n) \mapsto (x_1:1)\times\cdots\times(x_n:1).
\end{equation}
As before, $f$ compactifies via $\rho$ through $\Tc$ to $\phi:\Tc \dto(\PP^1)^n$ as defined in \eqref{eqSettingP1n}. 

\medskip

When $\Tc=\PP^{n-1}$, in \cite{Bot08} was studied the relation between the tensor product of length-one approximation complexes and a Koszul complex $\k.$ (cf.\ Section $3$ loc.\ cit.). In that case, the implicit equation of $\phi$ is computed as a homogeneous resultant by means of a graded strand of $\k.$. We will recall the definition of $\k.$.

We associate to each pair of homogeneous polynomials $f_i$, $g_i$ a linear form $L_i:=Y_i f_i - X_i g_i$ in the ring $R:=A[\X]$ of bidegree $(d,1)$. 
Write $\k.$ for the Koszul complex $\k.(L_1,\hdots,L_n; R)$, associated to the sequence $L_1,\hdots,L_n$ and coefficients in $R$. The $\NN^n$-graded $\kk$-algebra $\BB:= \coker (\bigoplus_i R(-d_i,-1) \to R)$ is the multihomogeneous coordinate ring of the incidence scheme $\Gamma=\overline{\Gamma_\Omega}$. It can be easily observed that $\BB\cong \bigotimes_A \Sym_A(I^{(i)})\cong R/(L_1,\hdots,L_n)$. 

We show that under certain conditions on the $L_i$, there exist an element in $\kk[\X]$ that vanishes whenever $L_1,\hdots,L_n$ have a common root in $\Tc$. This polynomial coincides with the sparse resultant $\Res_{\Tc}(L_1,\hdots,L_n)$. We will see that it is not irreducible in general, in fact, it is not only a power of the implicit equation, it can also have some extraneous factors, while the generic sparse resultant is always irreducible. Those factors come from some components of the base locus of $\phi$ which are not necessarily a common root of all $L_i$: it is enough that one of them vanishes at some point $p$ of $\Tc$ to obtain a base point of $\phi$. We will give sufficient conditions for avoiding extraneous factors.

We compute the implicit equation of the closed image of $\phi$ as a factor of the determinant of $(\k.)_{(\nu,*)}$, for certain degree $\nu$ in the grading of $A$. As in \cite{BDD08} the last map of this complex of vector spaces is a matrix $M_{\nu}$ that represents the closed image of $\phi$. Thus, we focus on the computation of the regularity of $\BB$ in order to bound $\nu$.

\begin{thm} \label{locosymalgT}
Suppose that $A$ is Cohen-Macaulay and $\k.$ is acyclic. Then 
\[
 H^0_\mm(\BB)_\nu=0 \textnormal{ for all }\nu \geq \nu_0 = \paren{\sum_i d_i} -\gamma,
\]
 where $\gamma := \max \{ i \ | \ \mathrm{C_i} \ \mathrm{contains} \ \mathrm{no} \ \mathrm{interior} \ \mathrm{points}    \ \}$. 
\end{thm}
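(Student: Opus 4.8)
The plan is to follow the pattern of the proof of Theorem~\ref{annih}, but with the Koszul complex $\k.=\k.(L_1,\dots,L_n;R)$ in place of the approximation complex $\Zc_\bullet$. Recall that $\BB\cong R/(L_1,\dots,L_n)=H_0(\k.)$, and that the acyclicity hypothesis means $H_q(\k.)=0$ for $q>0$, so that $\k.$ is a finite free $R$-resolution of $\BB$. I would then compute $H^\bullet_\mm(\BB)$ through the two spectral sequences of the double complex $C^\bullet_\mm(\k.)$ obtained by applying the local cohomology complex $C^\bullet_\mm(-)$ to each term of $\k.$, both converging to the hypercohomology of $\k.$ along $\Gamma_\mm$.

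In the first spectral sequence one takes the homology of $\k.$ first: by acyclicity ${}'E_2^{p,q}=H^p_\mm(H_q(\k.))$ equals $H^p_\mm(\BB)$ for $q=0$ and vanishes for $q>0$, so it degenerates at the second page and yields that the total degree $0$ part of the abutment is $H^0_\mm(\BB)$. In the second spectral sequence one applies $H^\bullet_\mm$ termwise. Writing $K_q=\bigwedge^q\bigl(\bigoplus_{i=1}^n R(-d_i,-1)\bigr)$ for the $q$-th term of $\k.$, the module $K_q$ is $R$-free and $R=A[\X]$ is free over $A$, so flat base change gives $H^p_\mm(K_q)\cong H^p_\mm(A)\otimes_\kk(\text{a free }\kk[\X]\text{-module})$ with a bidegree shift. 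As $A$ is Cohen--Macaulay of dimension $n$, $H^p_\mm(A)=0$ for $p\neq n$, hence ${}''E_1^{p,q}=H^p_\mm(K_q)$ is concentrated in the single row $p=n$; the sequence therefore degenerates there at $E_2$, and the only contribution to total degree $0$ is the top homology $H_n\bigl(H^n_\mm(K_\bullet)\bigr)$ of the row. Comparing the two abutments,
\[
H^0_\mm(\BB)\;\cong\;\ker\bigl(H^n_\mm(K_n)\longrightarrow H^n_\mm(K_{n-1})\bigr)\;\subseteq\;H^n_\mm(K_n).
\]
Since $K_n=R\bigl(-\sum_i d_i,-n\bigr)$, we get $H^n_\mm(K_n)\cong\bigl(H^n_\mm(A)\otimes_\kk\kk[\X]\bigr)\bigl(-\sum_i d_i,-n\bigr)$, whose component in $A$-degree $\nu$ is $H^n_\mm(A)_{\nu-\sum_i d_i}\otimes_\kk\kk[\X]$.

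It remains to see that this piece vanishes for $\nu\geq\nu_0=\bigl(\sum_i d_i\bigr)-\gamma$, i.e.\ that $H^n_\mm(A)_\mu=0$ for $\mu\geq-\gamma$. This is exactly what $\gamma$ records: by graded local duality $H^n_\mm(A)^\vee\cong\omega_A$ (with the standard normalization), so that $H^n_\mm(A)_\mu$ is $\kk$-dual to $(\omega_A)_{-\mu}$, where $\omega_A$ is the graded canonical module of $A$; in the toric situation $\omega_A$ is the ideal generated by the interior lattice points, so that $(\omega_A)_i\neq 0$ precisely when $C_i$ contains an interior point, whence $\indeg(\omega_A)=\gamma+1$. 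Thus $(\omega_A)_{-\mu}=0$ for $-\mu\leq\gamma$, i.e.\ $H^n_\mm(A)_\mu=0$ for $\mu\geq-\gamma$, which forces $H^0_\mm(\BB)_\nu=0$ for all $\nu\geq\nu_0$ and completes the proof.

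The routine ingredients here --- acyclicity of $\k.$, the degeneration of both spectral sequences, and the flat base change $H^p_\mm(R)\cong H^p_\mm(A)\otimes_\kk\kk[\X]$ --- are formal; the points where I would concentrate the care are the bookkeeping of the two gradings across the spectral sequences (so that the shift surviving on $K_n$ is exactly $-\sum_i d_i$ in the $A$-grading), and the precise normalization in the identification of $\gamma$ with the top nonvanishing degree of $H^n_\mm(A)$ via the canonical module / interior lattice points.
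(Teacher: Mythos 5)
Your proposal is correct and follows essentially the same route as the paper: both proofs compare the two spectral sequences of the double complex $C^\bullet_\mm(\k.)$, use Cohen--Macaulayness of $A$ to concentrate the second one in the row $p=n$, reduce the bound to the top Koszul term $K_n=R(-\sum_i d_i,-n)$, and translate the vanishing degree of $H^n_\mm(A)$ into $\gamma$ via $\omega_A\cong\kk[\relint(C)]$. Your identification of $H^0_\mm(\BB)$ as a submodule of $H^n_\mm(K_n)$ is just a slightly sharper phrasing of the paper's bound $\endd(H^0_\mm(\BB))\leq\max_p\{\endd(H^p_\mm(\KK_p))\}$.
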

\begin{proof} 
Write $\KK_q$ for the $q$-th object in $\k.$. Consider the two spectral sequences associated to the double complex $C^\bullet_\mm(\k.)$, both converging to the hypercohomology of $\k.$. As $\k.$ is acyclic the first spectral sequence stabilizes at $E_2$-term. The second one has as $E_1$-term $_1''E^p_q =\ H^p_\mm(\KK_q)$. 

Recall that for an $A$-module $M$, $\endd(M)= \max \{ \nu \ | \ M_\nu \neq 0 \}$. The comparison of the two spectral sequences shows that $H^0_\mm(\BB)_\nu$ vanishes as soon as $(_1{''}E^{p}_p)_\nu$ vanishes for all $p$, in fact we have 
\[
 \endd(H^0_\mm(\BB))\leq \max_{p\geq 0}\{\endd(_1{''}E^{p}_p)\}=\max_{p\geq 0}\{\endd(H^p_\mm(\KK_p))\}.
\]
It remains to observe that, since $\KK_p=\bigoplus_{i_1,\hdots,i_p} A(-\sum_{j=1}^p d_{i_j})\otimes_\kk \kk[\X](-p)$ and $\kk[\X]$ is flat over $\kk$, 
\[
 \max_{p\geq 0}\{\endd(H^p_\mm(\KK_p))\}=\max_{p\geq 0} \{\max_{i_1,\hdots,i_p}\{\endd(H^p_\mm(A(-\sum_{j=1}^p d_{i_j})))\}\}.
\]
Hence, we have
\[
 \endd(H^p_\mm(\KK_p))= \left\lbrace\begin{array}{ll}\endd(H^n_\mm(\omega_A^\vee(-\sum_i d_i)))& \mbox{for }p=n, \\
0 & \mbox{otherwise.} \end{array}\right.
\]
Finally, since $(H^n_\mm(\omega_A^\vee))_\nu=0$ for all $\nu\geq -\gamma$, we get 
\[
  \endd(H^n_\mm(\omega_A^\vee(-\sum_i d_i)))=\endd(H^n_\mm(\omega_A^\vee)))+\sum_i d_i<\sum_i d_i -\gamma. \qedhere
\]
\end{proof}

In order to compute the representation matrix $\Xi_\nu$ and the implicit equation of $\phi$, we need to be able to get acyclicity conditions for $\k.$. A theorem due to L.\ Avramov (cf.\ \cite{avr}) gives necessary and sufficient conditions for $(L_1,\hdots,L_n)$ to be a regular sequence in $R$ in terms of the depth of certain ideals of minors of the following matrix $\Xi:=(m_{ij})_{i,j}\in Mat_{2n,n}(A)$
\begin{equation}\label{matrizota}
\Xi=\left(\begin{array}{ccccc}-g_1&0&\cdots&0\\ f_1&0&\cdots&0\\ 
\vdots&\vdots&\ddots&\vdots\\ 0&0&\cdots&-g_n\\ 0&0&\cdots&f_n  \end{array}\right).
\end{equation}
This matrix defines a map of $A$-modules $\psi: A^n \to \bigoplus_{i=0}^n A[x_i,y_i]_1\cong A^{2n}$, we verify that the symmetric algebra $\Sym_A(\coker(\psi))\cong A[\X]/(L_1,\hdots,L_n)$. 
Since $\Sym_A(\coker(\psi))=\BB$ is naturally multigraded, it can be seen as a subscheme of $\Tc\times(\P1)^{n}$. This embedding is determined by the natural projection $A[\X]\to A[\X]/(L_1,\hdots,L_n)$. In fact, the graph of $\phi$ is an $(n-1)$-dimensional irreducible component of $\Proj(\Sym_A(\coker(\psi)))\subset \Tc\times(\P1)^{n}$ which is a projective fiber bundle outside the base locus of $\phi$ in $\Tc$.

\medskip

Let $\Xi$ be a matrix as in (\ref{matrizota}). Write $I_r:=I_r(\Xi)$ for the ideal of $A$ generated by the $r\times r$ minors of $\Xi$, for $0\leq r \leq r_0:=\min\{n+1,m\}$, and define $I_0:=A$ and $I_r:=0$ for $r>r_0$. Theorem \ref{teoRes} below relates both algebraic and geometric aspects. It gives conditions in terms of the ideals of minors $I_r$, for the complex to being acyclic, and on the equation given by the determinant of a graded branch for describing the closed image of $\phi$.
\medskip

\subsection{The implicit equation}

In this part we gather together the facts about acyclicity of the complex $\k.$, and the geometric interpretation of the zeroes of the ideals of minors $I_r$, and we show that under suitable hypotheses no extraneous factor occurs. In order to do this, we introduce some previous notation, following that of \cite{Bot08}. 

Denote by $W$ the closed subscheme of $\Tc \subset \PP^N$ given by the common zeroes of all $2n$ polynomials, write $I^{(i)}$ for the ideal $(f_i,g_i)$ of $A$, and $X$ the base locus of $\phi$, namely
\begin{equation}\label{WX}
 W:=\Proj\paren{A/\sum_{i}I^{(i)}}, \textnormal{ and } \qquad X:=\Proj\paren{A/\prod_{i}I^{(i)}}.
\end{equation}

Set $\alpha\subset [1,n]$, write $I^{(\alpha)}:=\sum_{j\in \alpha} I^{(j)}$, and set $X_{\alpha}:=\Proj(A/I^{(\alpha)})$ and $U_{\alpha}:=X_{\alpha}\setminus\bigcup_{j\notin \alpha}X_{\{j\}}$. If $U_{\alpha}$ is non-empty, consider $p\in U_{\alpha}$, then $\dim(\pi_1^{-1}(p))=|\alpha|$. As the fiber over $U_\alpha$ is equidimensional by construction, write
\begin{equation}\label{VBunbleAlpha}
 \EEE_{\alpha}:=\pi_1^{-1}(U_{\alpha})\subset \Tc\times (\P1)^{n}
\end{equation}
for the fiber over $U_{\alpha}$, which defines a multiprojective bundle of rank $|\alpha|$. Consequently, 
\[
 \codim(\EEE_{\alpha})=n-|\alpha|+(\codim_{\Tc}(U_{\alpha})).
\]
Recall from Definition \ref{defXyOmega} that $\Gamma_\Omega$ is the graph of $\phi$, and set $\Gamma:=\Biproj(\BB)$, the incidence scheme of the linear forms $L_i$. We show in the following theorem that under suitable hypothesis $\Gamma=\overline{\Gamma_\Omega}$, and that $\pi_2(\Gamma)=\Hc$ the implicit equation of the closed image of $\phi$.

\begin{thm}\label{teoRes} Let $\phi: \Tc\dashrightarrow (\P1)^{n}$ be defined by the pairs $(f_i:g_i)$, not both being zero, as in \eqref{eqSettingP1n}. Write for $i=1,\dots,n$, $L_i:=\fxi$ and $\BB:=A[\X]/(L_1,\hdots,L_n)$. Take $\nu_0 = (\sum_i d_i) -\gamma$ as in Theorem \ref{locosymalgT}.
\begin{enumerate}
\item The following statements are equivalent:
 \begin{enumerate}
  \item \label{teo-kos} $\k.$ is a free resolution of $\BB$;
  \item \label{teo-dep} $\codim_A(I_r)\geq n-r+1$ for all $r=1,\hdots,n$;
  \item \label{teo-cod} $\dim(\bigcap_{\alpha\subset [1,n]}( V(\prod_{j\in \alpha}I^{(j)})))\leq |\alpha|-2$. 
 \end{enumerate}
\item\label{teo-equ} If any (all) of the items above are satisfied, then $M_\nu$ has generically maximal rank, namely $\binom{n-1+\nu}{\nu}$. Moreover, if for all $\alpha\subset [1,n]$, $\codim_A(I^{(\alpha)})> |\alpha|$, then, 
\[
 \det((\k.)_\nu)=\det(M_\nu)=H^{\deg(\phi)}, \quad \textnormal{for }\nu\geq\nu_0,
\]
where $\det(M_\nu)$ and $H$ is the irreducible implicit equation of the closed image of $\phi$.
\end{enumerate}
\end{thm}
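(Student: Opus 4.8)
The plan is to prove Theorem~\ref{teoRes} in two halves, first establishing the chain of equivalences \ref{teo-kos}$\Leftrightarrow$\ref{teo-dep}$\Leftrightarrow$\ref{teo-cod}, and then deducing the statement about the determinant and the implicit equation. For the equivalence \ref{teo-kos}$\Leftrightarrow$\ref{teo-dep}, I would invoke the theorem of L.~Avramov on when a sequence of linear forms on a module is a regular sequence: the linear forms $L_1,\hdots,L_n$ form a regular sequence in $R=A[\X]$ if and only if the Buchsbaum--Rim-type depth conditions on the ideals of minors $I_r=I_r(\Xi)$ of the matrix $\Xi$ in \eqref{matrizota} hold, namely $\codim_A(I_r)\geq n-r+1$ for $r=1,\hdots,n$. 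Since $\k.$ is the Koszul complex on $L_1,\hdots,L_n$, it is a resolution of $\BB=R/(L_1,\hdots,L_n)$ precisely when this sequence is regular, which gives \ref{teo-kos}$\Leftrightarrow$\ref{teo-dep}. The passage \ref{teo-dep}$\Leftrightarrow$\ref{teo-cod} is the translation of the algebraic depth condition into the geometry of the base locus: because $\Xi$ is block-diagonal with $2\times 1$ blocks $\binom{-g_i}{f_i}$, an $r\times r$ minor of $\Xi$ is (up to sign) a product $\prod_{j\in\alpha}h_j^{(\epsilon_j)}$ over an $r$-subset $\alpha\subset[1,n]$ with $h_j^{(\epsilon_j)}\in\{f_j,g_j\}$, so $V(I_r)=\bigcup_{|\alpha|=r}\bigcap_{j\in\alpha}V(f_j,g_j)=\bigcup_{|\alpha|=r}X_\alpha$; rewriting $\codim_A(I_r)\geq n-r+1$ as $\dim X_\alpha\leq |\alpha|-2$ for every $\alpha$ yields \ref{teo-cod}. (Some care is needed to match $V(\prod_{j\in\alpha}I^{(j)})=\bigcup_{j\in\alpha}V(I^{(j)})$ with the intersection description; this is where I would be most careful about index conventions, but it is bookkeeping, not a genuine difficulty.)

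For part~\ref{teo-equ}, assuming the equivalent conditions hold, I would argue as follows. By Theorem~\ref{locosymalgT}, for $\nu\geq\nu_0=(\sum_i d_i)-\gamma$ we have $H^0_\mm(\BB)_\nu=0$; combined with acyclicity of $\k.$ (which holds by \ref{teo-kos}), the graded strand $(\k.)_{(\nu,*)}$ is a finite free complex of $\kk[\X]$-modules whose homology is concentrated in degree zero and equals $\BB_{(\nu,*)}$ which, modulo its torsion, is the homogeneous coordinate ring of $\pi_2(\Gamma)$. Hence $(\k.)_{(\nu,*)}$ is generically exact of rank zero on $\kk[\X]$, its MacRae determinant $D=\det((\k.)_\nu)$ is a well-defined element of $\kk[\X]$ independent of $\nu\geq\nu_0$ up to a scalar, and by the standard argument (as in \cite[Thm.\ 5.2]{BuJo03} and \cite{Bot08}) $D$ equals $H^{\deg(\phi)}$ times an extraneous factor $G$, while the last map $(\k.)_1\to(\k.)_0$ in degree $\nu$ is the matrix $M_\nu$ whose maximal minors have gcd equal to $D$; generic maximal rank $\binom{n-1+\nu}{\nu}$ of $M_\nu$ follows because $\BB_{(\nu,*)}$ has the expected rank as a $\kk[\X]$-module (it is a hypersurface ring in the $X_i,Y_i$). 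The heart is then showing $G$ is a unit under the extra hypothesis $\codim_A(I^{(\alpha)})>|\alpha|$ for all $\alpha\subset[1,n]$. Here I would use the multiprojective bundle description: the extraneous factors come exactly from the components $\EEE_\alpha=\pi_1^{-1}(U_\alpha)$ of $\Proj(\BB)$ lying over the strata $U_\alpha$ of the base locus, and $\codim(\EEE_\alpha)=n-|\alpha|+\codim_\Tc(U_\alpha)$; the hypothesis $\codim_A(I^{(\alpha)})>|\alpha|$, i.e.\ $\dim U_\alpha<n-1-|\alpha|$, forces $\codim(\EEE_\alpha)>n-1$, so every such $\EEE_\alpha$ has dimension strictly less than $n-1=\dim\Gamma_\Omega$ and therefore cannot contribute a hypersurface component to $\pi_2(\Gamma)\subset(\P1)^n$; consequently $\Gamma=\overline{\Gamma_\Omega}$ is the only $(n-1)$-dimensional component and $\pi_2(\Gamma)=\Hc$, forcing $G$ to be constant and $D=H^{\deg(\phi)}$.

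The main obstacle I anticipate is the last step, controlling the extraneous factor $G$: one must show that the only codimension-one contribution to the MacRae invariant $D$ comes from the closure of the graph $\Gamma_\Omega$, and that every spurious component $\EEE_\alpha$ of $\Biproj(\BB)$ over the base locus strata is too small to project onto a hypersurface in $(\P1)^n$. This requires a careful dimension count for the multiprojective bundles $\EEE_\alpha$ and an argument that $D$, as a divisor, is supported on $\pi_2$ of the codimension-$\leq 1$ part of $\Biproj(\BB)$; I would carry this out by localizing at each height-one prime of $\kk[\X]$, computing the length of $(\BB_\nu)_\pp$ there, and checking it vanishes unless $\pp=(H)$, in which case the $\deg(\phi)$-multiplicity arises from the generic fiber of $\pi_2$ over $\Hc$. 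The rest — Avramov's criterion, the minor computation, and the spectral-sequence input from Theorem~\ref{locosymalgT} — is either quotable or routine.
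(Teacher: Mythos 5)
Your proposal is correct and follows essentially the same route as the paper: part (1) via Avramov's criterion (the paper simply cites \cite{Bot08} for this, noting that only Cohen--Macaulayness of $A$ is needed), and part (2) via the dimension count $\codim(\EEE_\alpha)=n-|\alpha|+\codim_{\Tc}(U_\alpha)>n$, which forces $\Gamma=\overline{\Gamma_\Omega}$, followed by the divisor/length computation of $\det((\k.)_\nu)$ at height-one primes of $\kk[\X]$ with length $\deg(\phi)$ at $(H)$. The only corrections are the bookkeeping items you already flagged: the nonzero $r\times r$ minors give $V(I_r)=\bigcap_{|\alpha|=r}\bigcup_{j\in\alpha}V(I^{(j)})=\bigcup_{|\beta|=n-r+1}X_\beta$ rather than $\bigcup_{|\alpha|=r}X_\alpha$, and the hypothesis yields $\codim(\EEE_\alpha)>n$ (not merely $>n-1$), which is exactly what gives $\dim\EEE_\alpha<n-1$.
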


\begin{proof}
For proving the first part, we refer the reader to \cite{Bot08}, where it has been studied the case $\Tc=\PP^{n-1}$. It remains to observe that the only necessary condition over $A$ is to be Cohen-Macaulay.

\medskip
For proving the second part, the hypotheses have been taken in such a way that $\codim_A(\sum_{j\in \alpha}I^{(j)})> |\alpha|$, for all $\alpha\subset [1,n]$, which implies that $\codim_{\Tc}(U_{\alpha})>|\alpha|$, thus 
\[
\codim(\EEE_{\alpha})>n=\codim(\Gamma_\Omega). 
\]
Set $\Gamma_U:=\coprod_{\alpha}\EEE_{\alpha}$, and observe that $\Gamma\setminus\Gamma_U=\Gamma_\Omega$. Clearly, $\codim(\Gamma_U)>n=\codim(\Gamma_\Omega)=\codim(\overline{\Gamma_\Omega})$.

Since $\Spec (\BB)$ is a complete intersection, in $\AA^{2n}$ it is unmixed and purely of codimension $n$. As a consequence, $\Gamma\neq \emptyset$ is also purely of codimension $n$. This and the fact that $\codim (\Gamma_U)>n$ implies that $\Gamma=\overline{\Gamma_\Omega}$. The graph $\Gamma_\Omega$ is irreducible hence $\Gamma$ as well, and its projection (the closure of the image of $\phi$) is of codimension-one. 

It remains to observe that $\k.$ is acyclic, and $H_0(\k.)\cong\BB$. Considering the homogeneous strand of degree $\nu>\eta$ we get the following chain of identities (cf.\ \cite{KMun}):
\[
 \begin{array}{rl}
[\det((\k.)_\nu)]&=\div_{\kk[\X]}(H_0(\k.)_\nu)\\
&=\div_{\kk[\X]}(\BB_\nu)\\
&=\sum_{\pp \textnormal{ prime, }\codim_{\kk[\X]}(\pp)=1}\length_{\kk[\X]_\pp} ((\BB_\nu)_\pp)[\pp]. 
\end{array}
\]
Our hypothesis were taken in such a way that only one prime occurs. Also since 
\[
 [\det((\k.)_\nu)]=\div_{\kk[\X]}(\res)= e \cdot [\qq],
\]
for some integer $e$ and $\qq:=(H)\subset \kk[\X]$, we have that 
\[
 \sum_{\pp \textnormal{ prime, }\codim(\pp)=1}\length_{\kk[\X]_\pp} ((\BB_\nu)_\pp)[\pp]=e\cdot [\qq],
\]
and so $[\det((\k.)_\nu)]=\length_{\kk[\X]_\qq} ((\BB_\nu)_\qq)[\qq]$. As the $\Gamma_\Omega$ is irreducible, $\Gamma$ is also irreducible and we have
\[
 \length_{\kk[\X]_\qq} ((\BB_\nu)_\qq)=\dim_{\kkk(\qq)}{(\BB_\nu\otimes_{\kk[\X]_\qq} \kkk(\qq))}=\deg(\phi),
\]
where $\kkk(\qq):=\kk[\X]_\qq/\qq \cdot \kk[\X]_\qq$ which completes the proof.
\end{proof}

\subsection{Analysis of the extraneous factors}

Theorem \ref{teoRes} can be generalized (in the sense of \cite[Sec.\ 4.2]{Bot08}) taking into account the fibers in $\Tc\times (\PP^1)^n$ that give rise to extraneous factors, by relaxing the conditions on the ideals $I_r$ stated in Theorem \ref{teoRes}. Recall from \eqref{WX} that $W:=\Proj(A/\sum_{i}I^{(i)})$ and $X:=\Proj(A/\prod_{i}I^{(i)})$, and that for each $\alpha\subset [1,n]$, $I^{(\alpha)}:=\sum_{j\in \alpha} I^{(j)}$, $X_{\alpha}:=\Proj(A/I^{(\alpha)})$ and $U_{\alpha}:=X_{\alpha}\setminus\bigcup_{j\notin \alpha}X_{\{j\}}$. As was defined in \eqref{VBunbleAlpha}, $ \EEE_{\alpha}:=\pi_1^{-1}(U_{\alpha})\subset \Tc\times (\P1)^{n}$ is a multiprojective bundle of rank $|\alpha|$ over $U_{\alpha}$, such that $\codim(\EEE_{\alpha})=n-|\alpha|+(\codim_{\Tc}(U_{\alpha}))$.

\begin{lem}\label{lemCodimAvramov}
 Let $\phi: \Tc \dashrightarrow (\P1)^{n}$, be a rational map satisfying condition 1 in Theorem \ref{teoRes}. Then, for all $\alpha\subset [1,n]$, $\codim_A(I^{(\alpha)})\geq |\alpha|$.
\end{lem}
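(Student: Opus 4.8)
The plan is to read the ideals of minors $I_r = I_r(\Xi)$ of the matrix $\Xi$ of \eqref{matrizota} off its block structure, and then to deduce the bound on $\codim_A(I^{(\alpha)})$ from the hypothesis $\codim_A(I_r)\ge n-r+1$ for all $r$ (one of the equivalent conditions gathered in part 1 of Theorem \ref{teoRes}) by a short pigeonhole argument.

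First I would note that $\Xi$ is block diagonal, the $j$-th column carrying only the entries $-g_j,f_j$ in the two rows indexed by $X_j,Y_j$. Consequently, fixing a set $\beta\subseteq[1,n]$ of $r$ columns, an $r\times r$ submatrix supported on these columns has zero determinant unless its $r$ rows meet each pair $\{X_j,Y_j\}$, $j\in\beta$, in exactly one row, in which case the submatrix is, after reordering rows, diagonal with diagonal entries among $\{f_j,g_j\}_{j\in\beta}$. Hence $I_r$ is generated by the products $\prod_{j\in\beta}u_j$ with $|\beta|=r$ and $u_j\in\{f_j,g_j\}$, i.e.
\[
 I_r=\sum_{|\beta|=r}\ \prod_{j\in\beta}I^{(j)},\qquad\text{so}\qquad V(I_r)=\bigcap_{|\beta|=r}\ \bigcup_{j\in\beta}V(I^{(j)}).
\]

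Now fix $\alpha\subseteq[1,n]$; we may assume $\alpha\neq\emptyset$, since the statement is otherwise vacuous. Put $r:=n-|\alpha|+1$, so that $1\le r\le n$. The main step is the inclusion $V(I^{(\alpha)})\subseteq V(I_r)$: since $I^{(\alpha)}=\sum_{j\in\alpha}I^{(j)}$ we have $V(I^{(\alpha)})=\bigcap_{j\in\alpha}V(I^{(j)})$, and for any $\beta\subseteq[1,n]$ with $|\beta|=r$ the count $|\alpha|+|\beta|=n+1>n$ forces $\alpha\cap\beta\neq\emptyset$; choosing $k\in\alpha\cap\beta$ yields $\bigcap_{j\in\alpha}V(I^{(j)})\subseteq V(I^{(k)})\subseteq\bigcup_{j\in\beta}V(I^{(j)})$, and as $\beta$ was arbitrary the inclusion into $V(I_r)$ follows. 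Passing to radicals this gives $\sqrt{I_r}\subseteq\sqrt{I^{(\alpha)}}$, so every prime of $A$ containing $I^{(\alpha)}$ contains $I_r$, whence $\codim_A(I^{(\alpha)})\ge\codim_A(I_r)$. Since $\phi$ satisfies condition 1 of Theorem \ref{teoRes}, $\codim_A(I_r)\ge n-r+1=|\alpha|$, and therefore $\codim_A(I^{(\alpha)})\ge|\alpha|$, which is the assertion.

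The one point requiring attention is the explicit description of $I_r(\Xi)$; this is purely a matter of bookkeeping the rows and columns of the sparse matrix $\Xi$, and I expect it to be the only place where care is needed. Everything afterwards is formal, using only the pigeonhole principle, the equivalence $V(\mathfrak a)\subseteq V(\mathfrak b)\Leftrightarrow\sqrt{\mathfrak b}\subseteq\sqrt{\mathfrak a}$, and the fact that $\codim_A$ of an ideal is the minimum of the heights of the primes containing it (here $A$ is Cohen--Macaulay of dimension $n$, so no pathology arises).
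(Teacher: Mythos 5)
Your proof is correct. The explicit description of the minor ideals is right: by the block structure of $\Xi$, a nonzero $r\times r$ minor supported on columns $\beta$ must pick exactly one row from each pair $\{X_j,Y_j\}$, $j\in\beta$, so $I_r=\sum_{|\beta|=r}\prod_{j\in\beta}I^{(j)}$; and the pigeonhole step (with $r=n-|\alpha|+1$, any $\beta$ of size $r$ meets $\alpha$) correctly yields $\codim_A(I^{(\alpha)})\geq\codim_A(I_r)\geq n-r+1=|\alpha|$, using condition 1(b) of Theorem \ref{teoRes}. Note that the paper does not write out a proof at all: it simply refers to the corresponding lemma in \cite{Bot08} (for $\Tc=\PP^{n-1}$) with the remark that only Cohen--Macaulayness of $A$ is needed, and your argument is essentially the one underlying that reference, made self-contained. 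Two small observations: once you know the generators of $I_r$, you in fact get the ideal-theoretic inclusion $I_r\subseteq I^{(\alpha)}$ directly (each generator $\prod_{j\in\beta}u_j$ has a factor $u_k$ with $k\in\alpha\cap\beta$, hence lies in $I^{(\alpha)}$), so the detour through $V(\cdot)$ and radicals is unnecessary, though harmless; and your argument never actually uses the Cohen--Macaulay hypothesis, since it only compares codimensions of ideals, which if anything makes it slightly more general than the statement as cited.
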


\begin{proof} Same proof of \cite{Bot08}, it is only needed that $A$ be Cohen-Macaulay.
\end{proof}

We proceed as in loc.\ cit.\, defining the basic language needed to describe the geometry of the base locus of $\phi$.

\begin{defn}\label{defUOmegaXalpha}
For each $\alpha\subset [1,n]$, denote by $\Theta:=\{\alpha\subset [1,n]\ :\ \codim(I^{(\alpha)})=|\alpha|\}$. Hence, let $I^{(\alpha)}=(\cap_{\qq_i\in \Lambda_\alpha} \qq_i)\cap \qq'$ be a primary decomposition, where $\Lambda_\alpha$ is the set of primary ideals of codimension $|\alpha|$, and $\codim_A(\qq')>|\alpha|$. Write $X_{\alpha,i}:=\Proj(A/\qq_i)$ with $\qq_i\in \Lambda_\alpha$, and let $X_{\alpha,i}^{red}$ be the associated reduced variety. 

Write $\alpha:=\{i_1,\hdots,i_k\}\subset [1,n]$, and denote by $\pi_\alpha: (\P1)^{n}\to (\P1)^{n-|\alpha|}$ the projection given by 
\[
 \pi_\alpha: (x_1:y_1)\times \cdots \times(x_n:y_n)\mapsto (x_{i_{k+1}}:y_{i_{k+1}})\times \cdots \times(x_{i_{n}}:y_{i_{n}}).
\]

Define $\PP_\alpha:=\pi_\alpha((\P1)^{n})\cong (\P1)^{n-|\alpha|}$, and $\phi_{\alpha}:=\pi_{\alpha}\circ \phi:\Tc\dto \PP_\alpha$. 

 Denote by $W_\alpha$ the base locus of $\phi_{\alpha}$. Clearly $W\subset W_{\alpha}\subset X$ (cf.\ equation \eqref{WX}). Denote $\UU_{\alpha}:=\Tc\setminus W_{\alpha}$, the open set where $\phi_{\alpha}$ is well defined. Write $\Omega_{\alpha}:= X_{\alpha}\cap \UU_{\alpha}$ and $\Omega_{\alpha,i}:= X_{\alpha,i}\cap \UU_{\alpha}$. If $\alpha$ is empty, we set $\pi_\alpha=Id_{(\P1)^{n}}$, $\phi_\alpha=\phi$, $W_\alpha=W$ and $\UU_{\alpha}=\Omega_{\alpha}=\Omega$.

We get a commutative diagram as follows
\[
 \xymatrix@1{
 \Omega_\alpha\ \ar@{^(->}[r]\ar[rrrd]_{\phi_{\alpha}|_{\Omega_\alpha}}& \ X_\alpha\ \ar@{^(->}[r]& \ \Tc\ \ar@{-->}[r]^{\phi}\ar@{-->}[rd]^{\phi_{\alpha}}& \ (\P1)^{n}\ar@{->>}^{\pi_{\alpha}}[d]&\\
 &&&  **[r]\ \PP_\alpha. }
\]
\end{defn}

\begin{rem}
 Let $p\in \Tc$ be a point, then there exist a unique pair $(\alpha,i)$ such that $p\in \Omega_{\alpha,i}$. If $p\in W$, then $\alpha=\emptyset$ and no $i$ is considered.
\end{rem}
\begin{proof}
 It is clear by definition of $\Omega_\alpha$ that if $p\in W$, then $\alpha=\emptyset$ and no $i$ needs to be considered. Hence, assume that $p\in \Tc\setminus W$. Thus, we define $\alpha:=\{i\in [1,n]\ :\ f_i(p)=g_i(p)= 0\}$ which is a non-empty subset of $[1,n]$. For this set $\alpha$, define $\phi_\alpha$ according to Definition \ref{defUOmegaXalpha}, set $W_\alpha$ the base locus of $\phi_\alpha$ and $X_\alpha:=\Proj(A/I^{(\alpha)})$. By definition, $p\in \Omega_\alpha:=X_\alpha\setminus W_\alpha$. Since, in particular, $p\in X_\alpha$, it is one of its irreducible components that we denote by $X_{\alpha,i}$ following the notation of Definition \ref{defUOmegaXalpha}. We conclude that $p\in \Omega_{\alpha,i}:=X_{\alpha,i}\setminus W_\alpha$, from which we obtain the $(\alpha,i)$ of the statement.
\end{proof}

In the following lemma we define a multiprojective bundle of rank $|\alpha|$ over $\Omega_{\alpha,i}$.

\begin{lem}\label{bundle} 
 For $\phi$ as in Theorem \ref{teoRes}, and for each $\alpha \in \Theta$ and each $\qq_i\in \Lambda_\alpha$, the following statements are satisfied:
\begin{enumerate}
 \item $\Omega_{\alpha,i}$ is non-empty
 \item for all $p\in \Omega_{\alpha,i}$, $\dim(\pi_1^{-1}(p))=|\alpha|$
 \item the restriction $ \phi_{\alpha,i}$ of $\phi$ to $\Omega_{\alpha,i}$, defines a rational map 
\begin{equation}\label{phiai}
 \phi_{\alpha,i}:X_{\alpha,i}\dto \PP_\alpha\cong (\P1)^{n-|\alpha|}.
\end{equation}
\item $Z_{\alpha,i}:=\pi_1^{-1}(\Omega_{\alpha,i})\nto{\pi_1}{\lto}\Omega_{\alpha,i}$ defines a multiprojective bundle $\EEE_{\alpha,i}$ of rank $|\alpha|$ over $\Omega_{\alpha,i}$.
\end{enumerate}
\end{lem}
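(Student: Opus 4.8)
The plan is to follow the argument of \cite[Sec.~4.2]{Bot08}, which treats the case $\Tc=\PP^{n-1}$; exactly as for Lemma~\ref{lemCodimAvramov}, the only property of $A$ used there is Cohen--Macaulayness, so the proof transfers once the relevant codimension counts are in place. First I would record the structural facts I will reuse. Since $\qq_i$ is homogeneous and primary, $X_{\alpha,i}=\Proj(A/\qq_i)$ is irreducible (as a topological space), and because $\alpha\in\Theta$ and $\qq_i\in\Lambda_\alpha$ it is a component of $X_\alpha$ of codimension exactly $|\alpha|$; moreover $I^{(\alpha)}\subseteq\qq_i$, so $f_j$ and $g_j$ vanish on $X_{\alpha,i}$ for every $j\in\alpha$. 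On the other side, $\Omega_{\alpha,i}=X_{\alpha,i}\cap\UU_\alpha$ is exactly the locus in $X_{\alpha,i}$ where $\phi_\alpha=\pi_\alpha\circ\phi$ is defined, i.e.\ where all the pairs $(f_j,g_j)$ with $j\notin\alpha$ are non-zero, and hence an open subset of $X_{\alpha,i}$; and the base locus $W_\alpha$ of $\phi_\alpha$ is, set-theoretically, $\bigcup_{j\in[1,n]\setminus\alpha}X_{\{j\}}$.

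For (1) I would argue by contradiction: if $\Omega_{\alpha,i}=\emptyset$ then $X_{\alpha,i}\subseteq\bigcup_{j\notin\alpha}X_{\{j\}}$, and irreducibility forces $X_{\alpha,i}\subseteq X_{\{j\}}$ for some $j\notin\alpha$. Together with $X_{\alpha,i}\subseteq X_\alpha$ this gives $X_{\alpha,i}\subseteq V(I^{(\alpha)}+I^{(j)})=X_{\alpha\cup\{j\}}$, hence $|\alpha|=\codim_A(\qq_i)\geq\codim_A(I^{(\alpha)}+I^{(j)})\geq|\alpha|+1$ by Lemma~\ref{lemCodimAvramov}, a contradiction. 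So $\Omega_{\alpha,i}$ is a non-empty dense open subscheme of $X_{\alpha,i}$.

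For (2) and (4) I would compute fibres of $\pi_1$ directly from $\BB=A[\X]/(L_1,\dots,L_n)$, $L_j=Y_jf_j-X_jg_j$. Over $p\in\Omega_{\alpha,i}$ one has $L_j\equiv 0$ for $j\in\alpha$ (as $f_j,g_j\in\qq_i$), while for $j\notin\alpha$ the pair $(f_j(p),g_j(p))$ is non-zero, so $L_j$ specialises to a non-zero linear form cutting out a single point of the $j$-th $\PP^1$; hence $\pi_1^{-1}(p)\cong(\PP^1)^{|\alpha|}$ and $\dim\pi_1^{-1}(p)=|\alpha|$, which is (2). To obtain (4) I would globalise: restricting $\BB$ to $X_{\alpha,i}$ kills $L_j$ for $j\in\alpha$ outright, and over $\Omega_{\alpha,i}$ the ideal $(f_j,g_j)$ becomes locally the unit ideal for each $j\notin\alpha$, so the equation $L_j=0$ defines the graph of the morphism $\Omega_{\alpha,i}\to\PP^1$, $p\mapsto(f_j(p):g_j(p))$, i.e.\ a section of the $j$-th projection. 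Therefore $Z_{\alpha,i}=\pi_1^{-1}(\Omega_{\alpha,i})\cong\Omega_{\alpha,i}\times(\PP^1)^{|\alpha|}$, which is the asserted (here trivial) multiprojective bundle $\EEE_{\alpha,i}$ of rank $|\alpha|$ over $\Omega_{\alpha,i}$. Finally (3) is immediate from the same observation: $\phi_\alpha$ is given by the pairs $(f_j:g_j)$, $j\notin\alpha$, which are simultaneously non-vanishing on $\Omega_{\alpha,i}$, so $\phi_\alpha$ restricts to a morphism $\Omega_{\alpha,i}\to\PP_\alpha$, and since $\Omega_{\alpha,i}$ is dense in $X_{\alpha,i}$ by (1) this is a rational map $\phi_{\alpha,i}\colon X_{\alpha,i}\dto\PP_\alpha\cong(\PP^1)^{n-|\alpha|}$.

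I expect the main difficulty to be bookkeeping with scheme structures rather than anything conceptual. One must keep track that $X_{\alpha,i}$ may be non-reduced (as $\qq_i$ is only primary), be precise about what $\pi_1^{-1}(\Omega_{\alpha,i})$ means inside $\Gamma=\Biproj(\BB)$ — noting that the relevant fibre product is over $X_{\alpha,i}$, not $\Tc$ — and, for the local triviality in (4), check honestly that over $\Omega_{\alpha,i}$ the elements $L_j$ with $j\notin\alpha$ behave like coordinates on the chart where $f_j$ or $g_j$ is a unit, so that no embedded components appear and $Z_{\alpha,i}\to\Omega_{\alpha,i}$ is genuinely a locally trivial $(\PP^1)^{|\alpha|}$-fibration; this last point is exactly where one invokes the corresponding computation of \cite[Sec.~4.2]{Bot08}.
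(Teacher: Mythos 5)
Your proof is correct and follows essentially the same route as the paper's: the same contradiction via irreducibility of $X_{\alpha,i}$ combined with the codimension bound of Lemma \ref{lemCodimAvramov} for (1), and the same explicit fibre computation with the linear forms $L_j$ (identically zero for $j\in\alpha$, cutting out a single point for $j\notin\alpha$) yielding (2)--(4). Your additional remarks on scheme-theoretic bookkeeping and local triviality merely spell out what the paper leaves implicit in ``the last statement follows immediately from the previous ones.''
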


\begin{proof}
Fix $X_{\alpha,i}\subset X_{\alpha}$ and write $\alpha:=i_1,\hdots,i_k$. As $\Omega_{\alpha,i}=X_{\alpha,i}\setminus \bigcup_{j\notin \alpha}X_{\{j\}}$ it is an open subset of $X_{\alpha,i}$. If $\Omega_{\alpha,i}=\emptyset$ then $X_{\alpha,i}\subset \bigcup_{j\notin \alpha}X_{\{j\}}$, and as it is irreducible, there exists $j$ such that $X_{\alpha,i}\subset X_{\{j\}}$, hence $X_{\alpha,i}\subset X_{\{j\}}\cap X_\alpha= X_{\alpha\cup \{j\}}$. Denote by $\alpha':= \alpha\cup \{j\}$, it follows that $\dim(X_{\alpha'})\geq \dim(X_{\alpha,i})=n-|\alpha|>n-|\alpha'|$, which contradicts the hypothesis.

Let $p\in \Omega_{\alpha,i}$, $\pi_1^{-1}(p) = \{p\}\times\{q_{i_{k+1}}\}\times\cdots\times\{q_{i_{n}}\}\times(\P1)^{|\alpha|}$, where the point $q_{i_{j}}\in \P1$ is the only solution to the nontrivial equation $L_{i_{j}}(p,x_{i_{j}},y_{i_{j}})=y_{i_{j}}f_{i_{j}}(p)-x_{i_{j}}g_{i_{j}}(p)=0$. Then we deduce that $\dim(\pi_1^{-1}(p))=|\alpha|$, and that $\phi_{\alpha,i}:\Omega_{\alpha,i}\to \PP_\alpha$ given by $p\in \Omega_{\alpha,i}\mapsto \{q_{i_{k+1}}\}\times\cdots\times\{q_{i_{n}}\}\in \PP_\alpha$, is well defined.

The last statement follows immediately from the previous ones. 
\end{proof}

We get the following result.

\begin{thm}\label{teoResGral}  Let $\phi: \Tc\dashrightarrow (\P1)^{n}$ be defined by the pairs $(f_i:g_i)$, not both being zero, as in \eqref{eqSettingP1n}. Assume that $\codim_A(I_r)\geq n-r+1$ for all $r=1,\hdots,n$. Denote by $H$ the irreducible implicit equation of the closure of its image. Then, there exist relative open subsets, $\Omega_{\alpha,i}$, of $\Tc$ such that the restriction $\phi_{\alpha,i}$ of $\phi$ to $\Omega_{\alpha,i}$ defines a rational map $\phi_{\alpha,i}:\Omega_{\alpha,i}\to \PP_\alpha$ and positive integers $\mu_{\alpha,i}$ such that:
\[
 \res_{\Tc}(L_0,\hdots,L_n)=H^{\deg(\phi)}\cdot \prod_{\alpha,i} (H_{\alpha,i})^{\mu_{\alpha,i}\cdot\deg(\phi_{\alpha,i})}.
\]
\end{thm}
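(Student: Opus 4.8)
The strategy is to decompose the resultant $\res_{\Tc}(L_0,\hdots,L_n)$ according to the stratification of $\Tc$ by the locally closed sets $\Omega_{\alpha,i}$, exactly in the spirit of \cite[Sec.\ 4.2]{Bot08}. Under the hypothesis $\codim_A(I_r) \geq n-r+1$, Theorem \ref{teoRes}(1) tells us that $\k.$ is a free resolution of $\BB$, so for $\nu \geq \nu_0$ the determinant $\det((\k.)_\nu)$ is well-defined and equals $\div_{\kk[\X]}(\BB_\nu)$ up to units. Thus the first step is to write
\[
 [\det((\k.)_\nu)] = \div_{\kk[\X]}(\BB_\nu) = \sum_{\pp} \length_{\kk[\X]_\pp}((\BB_\nu)_\pp)\,[\pp],
\]
the sum running over the height-one primes $\pp$ of $\kk[\X]$. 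The task is then to identify which primes occur and compute the corresponding lengths, without assuming the extra codimension conditions of Theorem \ref{teoRes}(2) that forced $\Gamma = \overline{\Gamma_\Omega}$.

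\textbf{Key steps.} First I would analyze the irreducible components of $\Gamma = \Biproj(\BB)$. Since $\Spec(\BB)$ is a complete intersection of the $n$ forms $L_i$ in $\AA^{2n}$ over $A$, it is unmixed of codimension $n$; hence $\Gamma$ is purely of codimension one in $\Tc \times (\P1)^n$, i.e.\ equidimensional of dimension $n-1$ (note $\dim \Tc\times(\P1)^n = 2n-1$). The decomposition $\Gamma \setminus \Gamma_\Omega = \coprod_\alpha \EEE_\alpha$ and Lemma \ref{bundle} show that each $\EEE_{\alpha,i}$ is a multiprojective bundle of rank $|\alpha|$ over $\Omega_{\alpha,i}$, so $\dim \EEE_{\alpha,i} = \dim X_{\alpha,i} + |\alpha| - (\text{codim of }W_\alpha\text{-part})$; the hypothesis $\alpha\in\Theta$ (i.e.\ $\codim I^{(\alpha)} = |\alpha|$) makes $\dim X_{\alpha,i} = n-1-|\alpha|$ \emph{up to the codimension of $W_\alpha$ in $X_{\alpha,i}$}, which is where the factor $\deg(\phi_{\alpha,i})$ enters — precisely as in \cite{Bot08}. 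So the codimension-one components of $\Gamma$ are: the closure $\overline{\Gamma_\Omega}$ (contributing $(H)$), plus the closures of those $\EEE_{\alpha,i}$ that happen to have codimension exactly one. Second, for each such component I would compute its image under $\pi_2$: the graph of $\phi_{\alpha,i}$ projects onto the hypersurface $V(H_{\alpha,i}) \subset \PP_\alpha \cong (\P1)^{n-|\alpha|}$ (pulled back to $(\P1)^n$ via $\pi_\alpha$), and the fiber degree is $\deg(\phi_{\alpha,i})$ times the bundle rank contribution — this gives the exponent $\mu_{\alpha,i}\cdot\deg(\phi_{\alpha,i})$. Third, I would invoke \cite{KMun} / the MacRae-invariant formalism to equate $[\det((\k.)_\nu)]$ with the sum of $\length_{\kk[\X]_\pp}$ over these height-one primes, matching $\res_{\Tc}(L_0,\hdots,L_n)$ (which is $\div_{\kk[\X]}(\res)$ by definition) term by term; the length over the prime $(H)$ is $\deg(\phi)$ as in Theorem \ref{teoRes}, and the length over each $(H_{\alpha,i})$ is $\mu_{\alpha,i}\cdot\deg(\phi_{\alpha,i})$, where $\mu_{\alpha,i}$ records the multiplicity of $\overline{\EEE_{\alpha,i}}$ as a component of $\Gamma$ (a Hilbert–Samuel type multiplicity of the primary ideal $\qq_i$ in the local ring, analogous to Proposition \ref{extraFactorThm}).

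\textbf{Main obstacle.} The delicate point is the bookkeeping of \emph{which} $\EEE_{\alpha,i}$ actually contribute codimension-one components and the precise identification of the multiplicity $\mu_{\alpha,i}$. A component $\EEE_{\alpha,i}$ has codimension $n-|\alpha| + \codim_\Tc(U_{\alpha,i})$; for it to equal $n$ we need $\codim_\Tc(U_{\alpha,i}) = |\alpha|$, i.e.\ $\qq_i$ must be one of the minimal primes of $I^{(\alpha)}$ of codimension exactly $|\alpha|$ — this is why the indexing is over $\alpha\in\Theta$ and $\qq_i\in\Lambda_\alpha$. Separating the genuine base-point contribution (which feeds $\deg\phi_{\alpha,i}$ via the fiber of the bundle $\EEE_{\alpha,i}\to\Omega_{\alpha,i}$) from the multiplicity $\mu_{\alpha,i}$ with which that bundle sits inside $\Gamma$ requires a careful local computation at the generic point of each $\overline{\EEE_{\alpha,i}}$, localizing $\BB$ and computing the length of $(\BB_\nu)_\pp$; this is exactly the argument of \cite[Sec.\ 4.2]{Bot08} transported from $\PP^{n-1}$ to the general aCM scheme $\Tc$, and the only new input needed is that $A$ is Cohen–Macaulay, which guarantees both the acyclicity input (Theorem \ref{locosymalgT}, Lemma \ref{lemCodimAvramov}) and the unmixedness used to control the components. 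I expect no essentially new difficulty beyond faithfully adapting that computation; the proof should therefore read "the same proof as \cite[Sec.\ 4.2]{Bot08}, using only that $A$ is Cohen–Macaulay," with the component analysis above made explicit.
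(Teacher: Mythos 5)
Your proposal is correct and follows essentially the same route as the paper's own proof: decompose $\Gamma=\Biproj(\BB)$ into $\overline{\Gamma_\Omega}$ and the bundles $\EEE_{\alpha,i}$ of Lemma \ref{bundle}, push the cycles forward under $\pi_2$, and use the Knudsen--Mumford determinant-divisor formalism to identify the length of $(\BB_\nu)_\pp$ at $(H)$ with $\deg(\phi)$ and at each $(H_{\alpha,i})$ with $\mu_{\alpha,i}\cdot\deg(\phi_{\alpha,i})$, exactly as in \cite[Sec.\ 4.2]{Bot08} with Cohen--Macaulayness of $A$ as the only input. Only note the slip where you call $\Gamma$ ``purely of codimension one'' in $\Tc\times(\P1)^n$: it is purely of codimension $n$, as your own dimension count $\dim\Gamma=n-1$ correctly shows, and the components that matter are those whose $\pi_2$-image is a hypersurface (cf.\ Remark \ref{rem-notHypersurface}).
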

\begin{proof}
 The proof of this result follows similar lines of that of \cite[Thm.\ 22]{Bot08}. Recall $\Gamma:=\Biproj(\BB)$, and set $\Gamma_0:=\overline{\Gamma_\Omega}$, the closure of the graph of $\phi$. Applying $\pi_2$ to the decomposition $\Gamma\setminus \Gamma_U=\Gamma_0$ we see that $[\pi_2(\Gamma_U)]=[\res_{\Tc}(L_0,\dots,L_n)]-[\pi_2(\Gamma_0)]$ is the divisor associated to the extraneous factors. It is clear that $[\pi_2(\Gamma_U)]$ defines a principal divisor in $(\P1)^{n}$ denote by  $G=\frac{\res_{\Tc}(L_0,\dots,L_n)}{H^{\deg(\phi)}}$, with support on $\pi_2(\Gamma\setminus \Gamma_0)$, and that $\Gamma$ and $\Gamma_0$ coincide outside $X\times (\P1)^{n}$. 

By Lemma \ref{bundle}, for each $\alpha$ and each $\qq_i\in \Delta_\alpha\subset\Lambda_\alpha$, $\phi_{\alpha,i}$ defines a multiprojective bundle $\EEE_{\alpha,i}$ of rank $|\alpha|$ over $\Omega_{\alpha,i}$. 

By definition of $\Delta_\alpha$, $\overline{\pi_2(\EEE_{\alpha,i})}$ is a closed subscheme of $(\P1)^{n}$ of codimension-one. Denoting by $[\overline{\EEE_{\alpha,i}}]={\mu_{\alpha,i}}\cdot[\overline{\EEE_{\alpha,i}^{red}}]$ the class of $\overline{\EEE_{\alpha,i}}$ as an algebraic cycle of codimension $n$ in $\PP^{n-1}\times(\P1)^n$, we have $(\pi_2)_*[\overline{\EEE_{\alpha,i}}]= {\mu_{\alpha,i}}\cdot(\pi_2)_*[\overline{\EEE_{\alpha,i}^{\ red}}]= {\mu_{\alpha,i}}\cdot\deg(\phi_{\alpha,i})\cdot[\pp_{\alpha,i}]$, where $\pp_{\alpha,i}:=(H_{\alpha,i})$. 

As in Theorem \ref{teoRes}, one has for $\nu>\eta$:
\[
 \begin{array}{rl}
[\det((\k.)_\nu)]&=\div_{k[\X]}(H_0(\k.)_\nu)\\
&=\div_{k[\X]}(\BB_\nu)\\
&=\sum_{\pp \textnormal{ prime, }\codim_{k[\X]}(\pp)=1}\length_{k[\X]_\pp} ((\BB_\nu)_\pp)[\pp].
\end{array}
\]
We obtain that
\[
 [\det((\k.)_\nu)]= \sum_{\alpha\in \Theta} \sum_{\pp_{\alpha,i}} \length_{k[\X]_{\pp_{\alpha,i}}} ((\BB_\nu)_{\pp_{\alpha,i}})[\pp_{\alpha,i}]+\length_{k[\X]_{(H)}} ((\BB_\nu)_{(H)})[(H)].
\]

In the formula above, for each $\pp_{\alpha,i}$ we have
\[ 
\length_{k[\X]_{\pp_{\alpha,i}}} ((\BB_\nu)_{\pp_{\alpha,i}})=\dim_{\kk(\pp_{\alpha,i})}{(\BB_\nu\otimes_{k[\X]_{\pp_{\alpha,i}}} \kk(\pp_{\alpha,i}))}={\mu_{\alpha,i}}\cdot\deg(\phi_{\alpha,i}),
\]
where $\kk(\pp_{\alpha,i}):=k[\X]_{\pp_{\alpha,i}}/\pp_{\alpha,i}\cdot k[\X]_{\pp_{\alpha,i}}$.

Consequently we get that for each $\alpha\in \Theta$, there is a factor of $G$, denoted by $H_{\alpha,i}$, that corresponds to the irreducible implicit equation of the scheme theoretic image of $\phi_{\alpha,i}$, raised to a certain power $\mu_{\alpha,i}\cdot\deg(\phi_{\alpha,i})$.
\end{proof} 

\begin{rem}\label{rem-notHypersurface}
 Observe that if $\im(\phi_{\alpha,i})$ is not a hypersurface in $\PP_\alpha$ then $\deg(\phi_{\alpha,i})$ is $0$, hence $(H_{\alpha,i})^{\mu_{\alpha,i}\cdot\deg(\phi_{\alpha,i})}=1$. Thus $\phi_{\alpha,i}$ does not give an extraneous factor.
\end{rem}


\section{The algorithmic approach: Hilbert and Ehrhart functions}\label{sec5algorithmic}

In this section we focus on the study of the size of the matrices $M_\nu$ obtained in the two cases $\KKK=\PP^n$ and $\KKK=(\PP^1)^n$. Let us analyze first the case $\KKK=\PP^n$, thus, where we get a map $\varphi:\Tc \dto \PP^n$ as defined in \eqref{eqSettingPn}. Assume also that the base locus of $\varphi$ is a zero-dimensional almost locally complete intersection scheme. Hence, the associated $\Zc$-complex is acyclic. We have shown in Section \ref{sec3Pn} that the matrix $M_\nu$ is obtained as the right-most map of the $(\nu,*)$-graded strand of the approximation complex of cycles $\Zc_\bullet(\h,A)_{(\nu,*)}$:
\begin{equation*}
\ 0 \to (\Zc_n)_{(\nu,*)}(-n) \to (\Zc_{n-1})_{(\nu,*)}(-(n-1)) \to \cdots \to (\Zc_1)_{(\nu,*)}(-1) \nto{M_\nu}{\to} (\Zc_0)_{(\nu,*)} .
\end{equation*}
Write $h_B(\mu):=\dim_\kk(B_\mu)$ for the Hilbert function of $B$ at $\mu$. Since $\Zc_i= Z_i[i \cdot d] \otimes_A A[\X]=Z_i[i \cdot d] \otimes_\kk \kk[\X]$, $(\Zc_i)_{(\nu,*)}=(Z_i[i \cdot d])_\nu \otimes_\kk \kk[\X]$, we have $M_\nu \in \mat_{h_{A}(\nu),h_{Z_1}(\nu+d)}(\kk[\X])$.

\medskip

Let $\KKK=(\P1)^n$, and assume we are given a map $\phi: \Tc \dto (\P1)^n$ as the one considered in \eqref{eqSettingP1n}, satisfying the conditions of Theorem \ref{teoResGral}. We obtain the matrix $M_\eta$ computed from the Koszul complex $(\k.)_{(\eta,*)}$. Hence, the matrix $M_\eta$ belongs to $\mat_{h_A(\eta),n h_A(\eta-d)}(\kk[\X])$.

Both numbers $h_{A}(\nu)$ and $h_{Z_1}(\nu+d)$, in the projective and multiprojective setting, can be computed easily in Macaulay2. The cost of computation depends on the ring structure of $A$. When $A$ is just any finitely generated $\NN$-graded Cohen-Macaulay $\kk$-algebra, finding a precise theoretical estimate of these numbers would be very difficult. Also, the module structure of $Z_1$ can also be very intricate. Since it is a $\NN$-graded sub-$A$-module of $A^{n+1}$, we have $h_{Z_1}(\nu+d)\leq (n+1)h_{A}(\nu+d)$.

\medskip

Assume now that the ring $A$ is the coordinate ring of a normal toric variety $\Tc$ defined from a polytope $\Nc$, as mentioned in Section \ref{ImageCodim1}, and later in Remarks \ref{RemToricCasePn} and \eqref{RemToricCaseP1n}. In this setting, the situation above can be rephrased in a more combinatorial fashion. Let $\Nc$ be a $(n-1)$-dimensional lattice polytope, that is a full-dimensional convex polytope in $\RR^{n-1}$ with vertices lying in $\ZZ^{n-1}$ . For any integer $k \geq 0$, the multiple $k\Nc=\{p_1+\cdots+p_k\ :\ p_i\in \Nc\}$ is also a lattice polytope, and we can count its lattice points. The function taking each integer $k\in \NN$ to the number $E_\Nc(k) = \#(k\Nc) \cap \ZZ^{n-1}$ of lattice points in the polytope $k\Nc$ is the \textit{Ehrhart function} of $\Nc$ (cf.\ \cite{MS}). Write $E^+_\Nc(k)= \#\relint(k\Nc) \cap \ZZ^{n-1}$, the number of integer points in the interior of $k\Nc$ (cf.\ \cite{Latte} for a software for computing those numbers). It is known that there is an identification between $\kk[\relint(C)]$ and $\omega_A$, hence, this can be understood as $E^+_\Nc(k)= h_{\omega_A}(k)$.

Let $C$ be the cone in $\RR^{n-1} \times \RR$ spanned in degree $1$ by the lattice points in the polytope $\Nc$, which is normal by assumption, hence $A$ is Cohen-Macaulay (cf.\ \ref{NfAlwaysNormal}). Assume $\Nc'$ stands for some integer contraction of $\Nc$ which is also normal and take $d\in \NN$ such that $d\Nc'=\Nc$. Then $A'=\kk[\Nc']$ its Cohen-Macaulay semigroup ring. As $d\Nc'=\Nc$, we have that $E_{\Nc'}(d \mu)=E_{\Nc}(\mu)$ for all $\mu$. Set $\gamma:= a_n(A)=\inf\{\mu\ :\ (\omega_A^\vee)_\mu=0\}$ and $\gamma':= a_n(A')=\inf\{\mu\ :\ (\omega_{A'}^\vee)_\mu=0\}$. As $(\omega_A^\vee)_{\mu} = \Hom_\kk(M_{-\mu},\kk)$, we have that $\gamma= \max \{ i \ : \ C_i \ \textnormal{contains } \textnormal{no } \textnormal{interior } \textnormal{points}\}$, where $C_i:=C\cap \ZZ^{n-1}\times \{i\}$, and similarly for $\gamma'$. For a deeper understanding we refer the reader to \cite[Sec.\ 5]{BH}.

Both $A$ and $A'$ give rise to two different -but related- implicitization problems, the following result gives a condition on the rings $A$ and $A'$ to decide when it is algorithmically better to choose one situation or the other.
 
\begin{lem} Take $\Nc$, $\Nc'$, $d$, $\gamma$ and $\gamma'$ as above. Then
 \begin{enumerate}
  \item $\gamma\geq \gamma'$;
  \item $d (\gamma'+1)\geq \gamma+1$;
 \end{enumerate}
\end{lem}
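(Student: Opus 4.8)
The plan is to reduce both bounds to an elementary fact about dilations of lattice polytopes, using the combinatorial reading of $\gamma$ and $\gamma'$ recalled just above. For a full-dimensional lattice polytope $P\subset\RR^{n-1}$ set
\[
 \delta(P):=\min\bigl\{\,m\ge 1\ :\ \relint(mP)\cap\ZZ^{n-1}\neq\emptyset\,\bigr\},
\]
which is finite because a sufficiently large dilation of a full-dimensional lattice polytope has an interior lattice point. Using $H^n_\mm(A)\cong\omega_A^\vee$, $\omega_A\cong\kk[\relint C]$, and the identity $E^+_{\Nc'}(d\mu)=E^+_{\Nc}(\mu)$, one checks from the description $\gamma=\max\{i:C_i\text{ has no interior point}\}$ (and its analogue) that $\gamma+1$ equals the least $m\ge 1$ with $\relint(m\Nc')\cap\ZZ^{n-1}\neq\emptyset$ and $\gamma'+1$ the same quantity for $\Nc$, i.e.\ $\gamma+1=\delta(\Nc')$ and $\gamma'+1=\delta(\Nc)$. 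So it suffices to prove $\delta(\Nc)\le\delta(\Nc')\le d\,\delta(\Nc)$.

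First I would observe that $\delta(P)$ is a genuine threshold: if $\relint(mP)$ contains a lattice point, so does $\relint((m+1)P)$. This is immediate from the Minkowski identity $(m+1)P=mP+P$ together with the standard inclusion $\relint(mP)+P\subseteq\relint((m+1)P)$ for convex bodies — add a vertex $v\in P\cap\ZZ^{n-1}$ to an interior lattice point of $mP$. Hence $\relint(mP)\cap\ZZ^{n-1}\neq\emptyset$ if and only if $m\ge\delta(P)$.

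The heart of the argument is the comparison of $\delta(\Nc)$ and $\delta(\Nc')$. Since $\Nc=d\Nc'$ we have $m\Nc=(md)\Nc'$ for every $m\ge 1$, so, by the previous paragraph,
\[
 \relint(m\Nc)\cap\ZZ^{n-1}\neq\emptyset\ \Longleftrightarrow\ \relint\bigl((md)\Nc'\bigr)\cap\ZZ^{n-1}\neq\emptyset\ \Longleftrightarrow\ md\ge\delta(\Nc').
\]
Taking the least admissible $m\ge 1$ gives
\[
 \delta(\Nc)=\min\{\,m\ge 1\ :\ md\ge\delta(\Nc')\,\}=\bigl\lceil \delta(\Nc')/d\bigr\rceil ,
\]
which is nothing but $E^+_{\Nc'}(d\mu)=E^+_{\Nc}(\mu)$ read off at the first $\mu$ making the count positive.

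Finally, from $\delta(\Nc)=\lceil\delta(\Nc')/d\rceil$ I would conclude: on the one hand $\delta(\Nc)\le\delta(\Nc')$, since $d\ge 1$ and $\delta(\Nc')$ is a positive integer, i.e.\ $\gamma'+1\le\gamma+1$, which is (1); on the other hand $d\,\delta(\Nc)\ge\delta(\Nc')$, since $\lceil x\rceil\ge x$, i.e.\ $d(\gamma'+1)\ge\gamma+1$, which is (2). The only delicate point in the whole argument is the bookkeeping in the first paragraph — fixing the normalization so that $\gamma+1$ is exactly the first dilation factor at which $\Nc'$ acquires an interior lattice point (and $\gamma'+1$ the corresponding one for $\Nc$); granting that, the statement is the one-line ceiling estimate above, and nothing beyond the normality of $\Nc$ and $\Nc'$ is used.
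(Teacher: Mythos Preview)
Your argument via the threshold $\delta(P)=\min\{m\ge1:\relint(mP)\cap\ZZ^{n-1}\neq\emptyset\}$ and the identity $\delta(\Nc)=\lceil\delta(\Nc')/d\rceil$ is exactly the mechanism behind the paper's proof (which argues item~(1) from $\Nc'\subset\Nc$ and item~(2) from $\mu\Nc=\mu d\,\Nc'$); your version is simply more carefully stated.

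The problem is precisely the bookkeeping you flag as ``delicate'', and you have it backwards. By the paper's conventions $A=\kk[\Nc]$, $A'=\kk[\Nc']$, and the cone $C$ in the description $\gamma=\max\{i:C_i\text{ has no interior point}\}$ is the cone over $\Nc$; hence $\gamma+1=\delta(\Nc)$ and $\gamma'+1=\delta(\Nc')$, the reverse of your assignment. The remark immediately following the lemma confirms this: with $\Nc$ the size-$3$ triangle and $\Nc'$ the unit simplex one finds $\gamma=0$, $\gamma'=2$, i.e.\ $\delta(\Nc)=1$, $\delta(\Nc')=3$. With the correct identifications your ceiling estimate yields $\gamma'\ge\gamma$ and $d(\gamma+1)\ge\gamma'+1$ --- and that is what the paper's own proof and the subsequent remark (which compares $d(\gamma+1)$ with $\gamma'+1$, not $d(\gamma'+1)$ with $\gamma+1$) actually establish. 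In short, the printed lemma has $\gamma$ and $\gamma'$ interchanged in both items; your derivation matches the printed statement only because you made the compensating swap in the first paragraph. The combinatorics is right; the identification $\gamma+1=\delta(\Nc')$ is not.
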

\begin{proof}
 As $d\geq 1$, we can assume $\Nc'\subset \Nc$, hence, the first item follows. For the second item, we just need to observe that if $\mu \Nc \cap \ZZ^n$ is nonempty, then $\mu d \Nc' \cap \ZZ^n$ neither it is. Taking $\mu$ the smallest positive integer with this property, and writing $\gamma=\mu+1$, the second item follows.
\end{proof}

\begin{rem}
 Is not true in general that $d (\gamma+1)> \gamma'+1$: take $\Nc$ as the triangle with vertices $(3,0)$, $(0,3)$, $(0,0)$ and $\Nc'$ the triangle with vertices $(1,0)$, $(0,1)$, $(0,0)$; hence $d=3$, $\gamma=0, \gamma'=2$. We obtain $d (\gamma +1)=3=\gamma' +1$, which shows also that $d \gamma$ need not be bigger than $\gamma'$. It is neither true that $d(\gamma+1)= \gamma'+1$, for instance, take $\Nc$ as the triangle with vertices $(4,0)$, $(0,4)$, $(0,0)$ and $\Nc'$ as before. Observe that $d (\gamma +1)=4(0+1)=4>\gamma' +1=2+1=3$.
\end{rem}
\medskip

\begin{lem}
 Let $f_0,\hdots,f_n$ be with generic coefficients and same denominator, $\Nc:=\Nc(f_0,\hdots,f_n)$, $\Nc'$ and $d$ such that $d \Nc'=\Nc$. Take $\nu_0=(n-1)-\gamma$ the bound established in \cite[Thm.\ 11]{BDD08}, and $\nu_0'=d(n-1)-\gamma'$. Write $\delta :=d (\gamma+1)- (\gamma'+1)$. Then $E_\Nc(\nu_0)>E_{\Nc'}(\nu_0')$ if and only if $\delta>d-1$. 
\end{lem}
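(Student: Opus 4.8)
The plan is to reduce the comparison of the two Ehrhart values to a comparison of their arguments, by combining the dilation identity $E_{\Nc}(\mu)=E_{\Nc'}(d\mu)$ recorded above with the strict monotonicity of the Ehrhart function of a positive-dimensional lattice polytope. No information about the $f_i$ beyond the polytope relation $\Nc=d\Nc'$ is used.

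First I would use that $\Nc=d\Nc'$, so $\mu\Nc=(d\mu)\Nc'$ for every $\mu$ and hence $E_{\Nc}(\nu_0)=E_{\Nc'}(d\nu_0)$; this turns the desired statement into a comparison of the single function $E_{\Nc'}$ at the integers $d\nu_0$ and $\nu_0'$ (both $\geq 0$ whenever the construction applies, since then $\nu_0,\nu_0'\geq 0$ and $d\geq 1$). Next I would invoke that $k\mapsto E_{\Nc'}(k)=\#\bigl((k\Nc')\cap\ZZ^{n-1}\bigr)$ is strictly increasing on $\ZZ_{\geq 0}$: writing the Ehrhart polynomial as $E_{\Nc'}(k)=\sum_{i=0}^{n-1}h_i^*\binom{k+n-1-i}{n-1}$ with $h_i^*\geq 0$ and $h_0^*=1$, Pascal's identity yields $E_{\Nc'}(k+1)-E_{\Nc'}(k)=\sum_{i=0}^{n-1}h_i^*\binom{k+n-1-i}{n-2}\geq\binom{k+n-1}{n-2}\geq 1$ for $k\geq 0$ (here $n-1\geq 1$). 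Consequently $E_{\Nc'}(d\nu_0)>E_{\Nc'}(\nu_0')$ if and only if $d\nu_0>\nu_0'$.

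It then remains to do the arithmetic of the bounds. Substituting $\nu_0=(n-1)-\gamma$ and $\nu_0'=d(n-1)-\gamma'$ gives $d\nu_0-\nu_0'=\bigl(d(n-1)-d\gamma\bigr)-\bigl(d(n-1)-\gamma'\bigr)=\gamma'-d\gamma$, while from $\delta=d(\gamma+1)-(\gamma'+1)=d\gamma+d-\gamma'-1$ one reads off $\gamma'-d\gamma=(d-1)-\delta$. Hence $d\nu_0>\nu_0'\iff\gamma'-d\gamma>0\iff\delta<d-1$, and combining with the two reductions above,
\[ E_{\Nc}(\nu_0)>E_{\Nc'}(\nu_0')\iff\delta<d-1. \]
Thus the equivalence holds with the inequality $\delta<d-1$ — equivalently $E_{\Nc}(\nu_0)<E_{\Nc'}(\nu_0')\iff\delta>d-1$ — so the sense of the inequality on $\delta$ in the displayed statement should be read accordingly (which is also the direction consistent with the heuristic that a large contraction factor $d$ favours the $\Nc'$-model). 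As a check, the polytopes $\Nc=\conv\{(3,0),(0,3),(0,0)\}$ and $\Nc'=\conv\{(1,0),(0,1),(0,0)\}$ of the preceding Remark give $d=3$, $\gamma=0$, $\gamma'=2$, whence $\nu_0=2$, $\nu_0'=4$, $E_{\Nc}(2)=28>15=E_{\Nc'}(4)$, while $\delta=0<2=d-1$.

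The only step carrying genuine content is the strict monotonicity of the Ehrhart counting function; the rest is the dilation identity (already available) and elementary rearrangement. If one prefers not to cite nonnegativity of the $h^*$-vector, an alternative is to produce, for each $k\geq 0$, a lattice point of $(k+1)\Nc'$ lying outside $k\Nc'$ — for instance $(k+1)v$ for a vertex $v$ of $\Nc'$ on whose supporting hyperplane the defining functional takes a positive value — but the $h^*$-version avoids any case analysis and is the cleanest route.
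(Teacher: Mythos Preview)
Your approach is essentially the paper's: reduce via the dilation identity $E_{\Nc}(\mu)=E_{\Nc'}(d\mu)$ to comparing the two arguments $d\nu_0$ and $\nu_0'$ of $E_{\Nc'}$, then do the arithmetic. The paper records $d\gamma=\gamma'+\delta-(d-1)$ (correct) but then writes $d\nu_0=d(n-1)-\gamma'+\delta-(d-1)$, which has the signs of $\delta$ and $(d-1)$ flipped; the stated direction $\delta>d-1$ is read off from that erroneous expression. Your computation $d\nu_0-\nu_0'=\gamma'-d\gamma=(d-1)-\delta$ is the correct one, and your check against the paper's own example in the preceding Remark ($d=3$, $\gamma=0$, $\gamma'=2$, so $E_{\Nc}(2)=28>15=E_{\Nc'}(4)$ while $\delta=0<2=d-1$) confirms that the equivalence should read $\delta<d-1$. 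You also make explicit the strict monotonicity of $k\mapsto E_{\Nc'}(k)$, which the paper uses tacitly in passing from the comparison of values to the comparison of arguments.
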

\begin{proof}
 We have seen that $E_{\Nc'}(d \nu_0)=E_{\Nc}(\nu_0)$, hence, it is enough to compare $E_{\Nc'}(d \nu_0)$ and $E_{\Nc'}(\nu_0')$. Writing $d\gamma= \gamma'+\delta-(d-1)$, we have
\begin{equation*}
 E_{\Nc'}(d \nu_0)=E_{\Nc'}(d(n-1)-d\gamma)=E_{\Nc'}(d(n-1)-\gamma'+\delta-(d-1)),
\end{equation*}
from where we deduce that $E_\Nc(\nu_0)>E_{\Nc'}(\nu_0')$ if and only if $\delta>d-1$.
\end{proof}

\medskip

\begin{cor}
 Let $f:\AA^{n-1}\dto \AA^n$ be a rational map with generic coefficients and same denominator, $\Nc:=\Nc(f)$, $\Nc'$ normal polytopes and $d$ such that $d \Nc'=\Nc$. Let $\Tc$ and $\Tc'$ be the arithmetically Cohen-Macaulay toric varieties defined from $\Nc$ and $\Nc'$ respectively, and $\varphi:\Tc\subset \PP^{E_\Nc(1)}\dto \PP^n$ and $\varphi':\Tc'\subset \PP^{E_{\Nc'}(1)}\dto \PP^n$.  Take $\nu_0$, $\nu_0'$ and $\delta$ as above. And write $M_{\nu_0}$ and $M'_{\nu_0'}$ the representation matrices of $\im(\varphi)$ and $\im(\varphi')$ respectively. Then $\#\textnormal{rows}(M_{\nu_0})>\#\textnormal{rows}(M'_{\nu_0'})$ if and only if $\delta>d-1$.
 
\end{cor}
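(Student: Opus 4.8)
The plan is to reduce the statement to the previous Lemma by identifying the number of rows of each representation matrix with an Ehrhart function. First I would recall the shape of the representation matrix established at the beginning of Section \ref{sec5algorithmic}: for a map $\varphi:\Tc\dto\PP^n$ as in \eqref{eqSettingPn} whose base locus is a zero-dimensional almost locally complete intersection, the matrix $M_\nu$ obtained as the right-most map of $(\Zc_\bullet)_{(\nu,*)}$ satisfies $M_\nu\in\mat_{h_A(\nu),\,h_{Z_1}(\nu+d)}(\kk[\X])$, so that $\#\mathrm{rows}(M_\nu)=h_A(\nu)$, the Hilbert function of the coordinate ring of the domain evaluated at $\nu$.

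Next I would check that the hypotheses needed for this to apply hold in both situations. Since $f$ has generic coefficients and a common denominator, the induced maps $\varphi:\Tc\dto\PP^n$ and $\varphi':\Tc'\dto\PP^n$ have empty base locus in their respective toric domains (as observed in the Remark following Definition \ref{defNf}); in particular both base loci are finite and locally almost complete intersections, so Corollary \ref{corAcycZToricPn} applies, the corresponding $\Zc$-complexes are acyclic, and $M_{\nu_0}$, $M'_{\nu_0'}$ are genuine representation matrices of the stated shape. I would also record the elementary degree bookkeeping underlying the previous Lemma: embedding $\Tc$ by the lattice points of $\Nc$ makes the forms defining $\varphi$ have degree $1$ in $A=\kk[S_\Nc]$, while embedding $\Tc'$ by the lattice points of $\Nc'$ makes them have degree $d$ in $A'=\kk[S_{\Nc'}]$, which is exactly what turns the bound of Theorem \ref{annih} into $\nu_0=(n-1)-\gamma$ and $\nu_0'=d(n-1)-\gamma'$ respectively.

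The only substantive point is the Ehrhart–Hilbert dictionary: since $\Nc$ and $\Nc'$ are normal, the degree-$\mu$ graded piece of $A$ (resp.\ $A'$) has a $\kk$-basis indexed by the lattice points of $\mu\Nc$ (resp.\ $\mu\Nc'$), whence $h_A(\mu)=E_\Nc(\mu)$ and $h_{A'}(\mu)=E_{\Nc'}(\mu)$ for all $\mu\in\NN$. Combining this with the first paragraph yields
\[
 \#\mathrm{rows}(M_{\nu_0})=h_A(\nu_0)=E_\Nc(\nu_0),\qquad \#\mathrm{rows}(M'_{\nu_0'})=h_{A'}(\nu_0')=E_{\Nc'}(\nu_0'),
\]
and then the preceding Lemma gives $E_\Nc(\nu_0)>E_{\Nc'}(\nu_0')$ if and only if $\delta>d-1$, which is the assertion. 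I do not expect a real obstacle here: the argument is essentially a dictionary plus a citation of the two previous lemmas. The one place to be careful is confirming that the maximum in Theorem \ref{annih} is attained at the terms $(n-1)d-\gamma$ and $d(n-1)-\gamma'$ (rather than at $(n-2)d$), i.e.\ that $\gamma,\gamma'$ are in the range where the quoted bounds $\nu_0,\nu_0'$ are the correct ones — this is implicit in reusing the bounds "as above" from the previous Lemma and is where the normality/genericity hypotheses are tacitly used.
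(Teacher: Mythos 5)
Your argument is exactly the one the paper intends: the corollary is stated as an immediate consequence of the preceding Lemma, via the identification $\#\mathrm{rows}(M_\nu)=h_A(\nu)=E_\Nc(\nu)$ (and likewise for $A'$, $\Nc'$) already set up at the start of Section \ref{sec5algorithmic}, together with the degree bookkeeping that makes the bounds $\nu_0=(n-1)-\gamma$ and $\nu_0'=d(n-1)-\gamma'$ the relevant ones. Your additional checks (empty base locus from genericity, normality giving the Ehrhart--Hilbert dictionary, and the caveat about which term achieves the maximum in Theorem \ref{annih}) are correct and simply make explicit what the paper leaves implicit.
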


\medskip
In the second case, given a map $\phi:\Tc \dto (\P1)^n$ as in Theorem \ref{teoResGral}, we obtain the matrix $M_\nu$ as the right-most matrix from the Koszul complex $(\k.)_{(\nu,*)}:$
\[
 \ 0 \to A_{\nu-n d}\otimes_\kk \kk[\X](-n) \to \cdots \to (A_{\nu-d})^n\otimes_\kk \kk[\X](-1)  \nto{M_\nu}{\lto} A_\nu\otimes_\kk \kk[\X] \to 0, 
\]
It is clear that $M_\nu$ is a $\dim_\kk(A_{\nu})$ by $\dim_\kk((A_{\nu-d})^n)$ matrix. As  $\bigoplus_{k\geq 0}\gen{C_k}_\kk=\kk[C]$ which is canonically isomorphic to $A$, and also $\dim_\kk(A_{\nu})=E_\Nc(\nu)$ and $\dim_\kk((A_{\nu-d})^n)=n E_\Nc(\nu-d)$, hence
\begin{equation}
 M_\nu \in \mat_{E_\Nc(\nu),n E_\Nc(\nu-d)}(\kk[\X]).
\end{equation}


\section{Examples}\label{sec6examples}

In this section we show, in a few examples, how the theory developed in earlier sections works. We first analyze two concrete examples of parametrized surfaces, given as the image of a rational map defined by rational functions with different denominators. There, we show the advantage of not taking a common denominator, and hence, we regard their images in $(\P1)^3$. Later we show how the method is well adapted for generic rational affine maps with a fixed polytope.

In the last part we show for the setting where $\KKK=(\P1)^n$, for small $n$, how the splitting of the base locus is done in order to obtain the family of multiprojective bundles.

\medskip

\begin{exmp}
 We consider here an example of a very sparse parametrization where the multihomogeneous compactification of the codomain is fairly better than the homogeneous compactification. Take $n=3$, and consider the affine map

\[
 f: \AA^2 \dto \AA^3: (s,t) \mapsto \paren{\frac{st^6+2}{st^5-3st^3},\frac{st^6+3}{st^4+5s^2t^6},\frac{st^6+4}{2+s^2t^6}}.
\]
 Observe that in this case there is no smallest multiple of the Newton polytope $\Nc(f)$ with integer vertices, hence, $\Nc(f)=\Nc'(f)$ as can be seen in the picture below.
\begin{center}
 \includegraphics[scale=0.6]{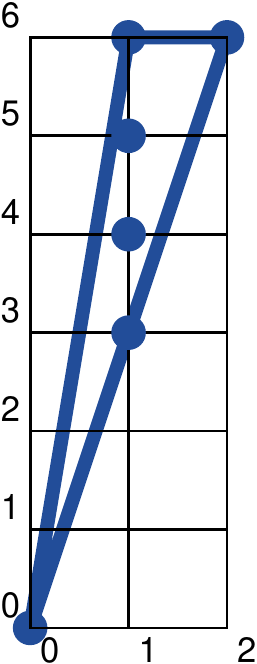}\label{ex2BDD}
\end{center}

Computing in Macaulay2 we get that the homogeneous coordinate ring is 
\[
 A=\frac{k[T_0,\ldots,T_5]}{(T_3^2-T_2T_4,T_2T_3-T_1T_4, T_2^2-T_1T_3,T_1^2-T_0T_5)}.
\]
When $\AA^3$ is compactified into $\PP^3$ we obtain from $f$ a new map $\varphi: \Tc\dto \PP^3$ by replacing $(s,t)$ by $T_0,\ldots,T_5$, and taking a common donominator. We can easily see that taking common denominator leads to polynomials of degree up to $23$ and the Newton polytope of the four new polynomials contains $26$ integer points instead of $6$. Again computing in Macaulay2, for $\nu_0=2$, the matrix $M_\nu$ has $351$ rows and about $500$ columns. It can be verified that this compactification gives a base point which is not locally a complete intersection, but locally an almost complete intersection, giving rise to extraneous factors.

On the other hand, compactifying $\AA^3$ into $(\P1)^3$ we get the map
{\small\[
\begin{array}{rcl}
 \phi: \Tc &\dashrightarrow & \PP^1\times \PP^1\times \PP^1\\
(T_0,\ldots,T_5) & \mapsto & (2T_0+T_4:-3T_1+T_3)(3T_0+T_4:T_2+5T_5)(4T_0+T_4:2T_0+T_5)
\end{array}
\]}

Computations in Macaulay2 give that for $\nu_0=3$ the matrix $M_{\nu_0}$ is of size $34 \times 51$. Since there are no base points with two-dimensional fibers, we get no extraneous factors and hence, $H^{\deg (\phi)}$ can be computed as $\frac{(34\times 34)\textnormal{-matrix}\cdot (1\times 1)\textnormal{-matrix}}{(17\times 17)\textnormal{-matrix}}$, getting an equation of degree (6,6,6).
\end{exmp}

\medskip

\begin{exmp}
 This example shows how the methods work in the generic case with a fixed polytope. We begin by taking $\Nc$ a normal lattice polytope in $\RR^{n-1}$. For the sake of clarity we will treat a particular case in small dimension. Hence, set $n=3$, and consider $\Nc$ as in the drawing below. It will remain clear that this example can be generalized to any dimension and any polytope with integer vertices. 

\begin{center}
 \includegraphics[width=0.15\textwidth]{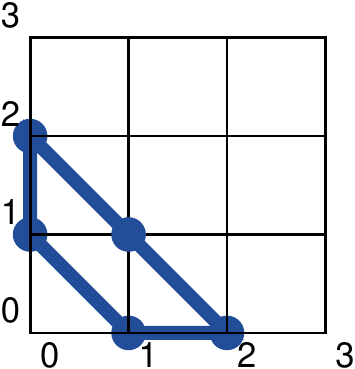}\label{ejemplo3}
\end{center}

Assume we are given six generic polynomials $f_1,f_2,f_3,g_1,g_2,g_3$ with support in $\Nc$, hence we get an affine rational map $f:\AA^2\dto \AA^3$ given by $(s,t)\mapsto(\frac{f_1}{g_1},\frac{f_2}{g_2},\frac{f_3}{g_3})$. We write $f_i=\sum_{(a,b)\in \Nc} U_{(a,b),i}\cdot s^at^b$, and $g_i=\sum_{(a,b)\in \Nc} V_{(a,b),i}\cdot s^at^b$. Set $\UU:=\{U_{(a,b),i}, V_{(a,b),i}:\ \textnormal{ for all }(a,b)\in \Nc, \textnormal{ and } i=1,2,3\}$, the set of coefficients, and define $\kk:=\ZZ[\UU]$.

\medskip

Now we focus on computing the implicit equation of a convenient compactification for the map. Let $\Tc$ be the toric variety associated to the Newton polytope $\Nc$, embedded in $\PP^4$. We will compare how the method works in the $\PP^3$ and $(\PP^1)^3$ compactifications of $\AA^3$ with domain $\Tc$. One key point to remark is that these two maps have no base points, since we are taking the toric compactification associated to $\Nc$ and generic coefficients, hence, we will not have any extraneous factors.

In the first case, we take common denominator obtaining four polynomials with generic coefficients in the polytope $3 \Nc$. If we consider the smallest multiple, we recover the polytope $\Nc$, and maps of degree $3$. We obtain in this case that $f$ factorizes through $\Tc\subset \PP^4$  via $\varphi:\Tc \dto \PP^3$ given by $4$ polynomials of degree $3$ in the variables $T_0,\hdots,T_4$. From Lemma \ref{lemAnnih}, we take $\nu_0 := \max\{3, 6-\gamma\}$. Since $2 \Nc$ has integer interior points but $\Nc$ does not, $\gamma=1$, thus $\nu_0=5$. Now, since $X$ is empty in $\Tc$, from Lemma \ref{ZacycALCI}, the complex $\Zc_\bullet$ is acyclic.

From Theorem \ref{mainthT} we see that the implicit equation can be computed as the determinant of the complex $(\Zc_\bullet)_{\nu}$ for $\nu\geq \nu_0$, or as the gcd of the maximal minors of the right-most map $(\Zc_1)_5(-1) \nto{M_\nu}{\lto} (\Zc_0)_5$. We can easily compute the dimension of $A_5$, by the formula $\#(k\cdot \Nc)= (k+1)(k+1+k/2)$. When $k=5$, we get $\#(5\cdot \Nc)=51$, hence $(\Zc_0)_5 = \kk^{51}[\X]$. Since $M_\nu$ gives a surjective map, $(\Zc_1)_5(-1)$ has dimension bigger than or equal to $51$.

\medskip

Instead of taking common denominator, we can proceed by compactifying $\AA^3$ into $(\P1)^3$. In this case we get a map $\phi:\Tc \dto (\P1)^3$ is given by $3$ pairs of linear functions on the variables $T_0,\hdots,T_4$. 

From Theorem \ref{locosymalgT}, we take $\nu \geq \nu_0 = 1+1+1-1=2$. Now, since the polynomials $f_i$ and $g_i$ have generic coefficients, hence $L_i:=Y_i f_i - X_i g_i$ does as well, thus $\k.:=\k.(L_1,L_2,L_3;A[\X])$ is acyclic. From Lemma \ref{teoRes}, the implicit equation can be computed as the determinant of the complex $(\k.)_{\nu}$ for $\nu\geq 2$, or as the gcd of the maximal minors of the right-most map $A_1^{3}[\X](-1)  \nto{M_2}{\lto} A_2[\X]$. Since $\dim(A_0)=1$, $\dim(A_1)=5$ and $\dim(A_2)=12$ we get the complex $\kk^{3}[\X](-2)\to \kk^{15}[\X](-1)  \nto{M_2}{\lto} \kk^{12}[\X]$. Thus, the implicit equation can be computed as the gcd of the maximal minors of a $(12\times 14)$-matrix, or as $\frac{\det(12\times12\textnormal{-matrix})}{\det(3\times 3\textnormal{-matrix})}$.
\end{exmp}

\medskip

\begin{exmp}
 Assume we are given four tuples of polynomials $f_i$, $g_i$, for $i\in [1,4]$, in three variables $s,t,u$. Let them be $f_1=s+tu^2$, $g_1=u^2$, $f_2=st$, $g_2=u^2$, $f_3=su^2$, $g_3=t$, $f_4=stu^2$, $g_4=1$. They define a rational map $f: \AA^3\dto \AA^4$ given by $(s,t,u)\mapsto (f_1/g_1,f_2/g_2,f_3/g_3,f_4/g_4)$.

In the spirit of this article, we compactify $\AA^3$ into the toric variety associated to the smallest multiple of the Newton polytope the input polynomials define. It is easy to see that this polytope $\Nc$ is a $(1\times 1\times 2)$-parallelepiped, and $\Tc\cong (\P1)^3\subset \PP^{11}$.

In order to detect the extraneous factor that occurs, consider the rational map
\[
 \begin{array}{rcl}
  \tilde{\phi}:(\P1)^3&\dto& (\P1)^4\\
(s:s')\times(t:t')\times(u:u')&\mapsto &(\tilde f_1: \tilde g_1)\times(\tilde f_2:\tilde g_2)\times (\tilde f_3:\tilde g_3)\times (\tilde f_4:\tilde g_4),
 \end{array}
\]
where $( \tilde - )$ means homogenizing with respect to the degree $(1,1,2)$ with the variables $s'$, $t'$ and $u'$.

We easily observe that the base locus has codimension $2$, in fact many lines occur in the base locus: There are 
\begin{enumerate}
 \item four lines $\LL_{1}=(1:0)\times(t:t')\times(1:0)$, $\LL_{2}=(1:0)\times(t:t')\times(0:1)$, $\LL_{3}=(0:1)\times(t:t')\times(1:0)$, $\LL_{4}=(0:1)\times(t:t')\times(0:1)$; 
 \item three lines $\LL_{5}=(1:0)\times(1:0)\times(u:u')$, $\LL_{6}=(1:0)\times(0:1)\times(u:u')$, $\LL_{7}=(0:1)\times(1:0)\times(u:u')$; and
 \item three lines $\LL_{8}=(s:s')\times(1:0)\times(1:0)$, $\LL_{9}=(s:s')\times(1:0)\times(0:1)$, $\LL_{10}=(s:s')\times(0:1)\times(0:1)$; 
 \item $7$ points of intersection of the previous lines: $\LL_1\cap \LL_5\cap \LL_8=\{(1:0)\times(1:0)\times(1:0)\}$, $\LL_1\cap \LL_6=\{(1:0)\times(0:1)\times(1:0)\}$, $\LL_2\cap \LL_5\cap \LL_9=\{(1:0)\times(1:0)\times(0:1)\}$, $\LL_2\cap \LL_6\cap \LL_{10}=\{(1:0)\times(0:1)\times(0:1)\}$, $\LL_3\cap \LL_7\cap \LL_8=\{(0:1)\times(1:0)\times(1:0)\}$, $\LL_4\cap \LL_7\cap \LL_9=\{(0:1)\times(1:0)\times(0:1)\}$ and $\LL_4\cap \LL_{10}=\{(0:1)\times(0:1)\times(0:1)\}$.
\end{enumerate}
Over those lines the fiber is of dimension $2$, except over the points of intersection of them.

In the language of Section $4.2$, we have that $W=\emptyset$. The set $\Theta$ formed by the sets $\alpha\subset [1,4]$ giving fibers of dimension $|\alpha|$, is 
\[
 \Theta=\{\{1,2\},\{1,3\},\{1,4\},\{2,3\},\{2,4\},\{3,4\},\{1,2,3\},\{1,3,4\},\{1,2,4\},\{2,3,4\}\}.
\]
Recall that this does not imply that every $\alpha\in \Theta$ will give an extraneous factor (cf.\ Remark \ref{rem-notHypersurface}). We clarify this:

As we have mentioned, the base locus is a union of lines with non-trivial intersection. Take $\alpha= \{1,2\}$. Set-theoretically $X_\alpha = \LL_1 \sqcup \LL_4$, and hence there are two irreducible components of $X_\alpha$, namely $X_{\alpha,1} = \LL_1$ and $X_{\alpha,2} = \LL_4$. The line $X_{\alpha,1} = \LL_1$ only intersects $\LL_5$, $\LL_6$ and $\LL_8$, hence 
\[
 \Omega_{\alpha,1} = \LL_1 \setminus (\LL_5\cap \LL_6\cap \LL_8)= \{(1:0)\times(t:t')\times(1:0)\ :\ t\neq 0 \textnormal{ and } t'\neq 0\}.
\]
\[
 \Omega_{\alpha,2} = \LL_4 \setminus (\LL_7\cap \LL_9\cap \LL_{10})= \{(0:1)\times(t:t')\times(0:1)\ :\ t\neq 0 \textnormal{ and } t'\neq 0\}.
\]
Since $\alpha= \{1,2\}$, the linear forms $L_1(p,\X)$ and $L_2(p,\X)$ vanish identically for all $p\in X_{\alpha}$, while $L_3(p,\X)=f_3(p)Y_3-g_3(p)X_3=t'Y_3$ and $L_4(p,\X)=tY_4$ for $p\in X_{\alpha,1}$. It is easy to note that none of them vanish if and only if $p\in \Omega_{\alpha,1}$. We get that $L_3(p,\X)=tX_3$ and $L_4(p,\X)=t'X_4$ for $p\in X_{\alpha,2}$.

Finally, for $\alpha= \{1,2\}$, we obtain two multiprojective bundles  $\EEE_{\alpha,i}$ over $ \Omega_{\alpha,i}$, for $i=1,2$,
\[
 \EEE_{\alpha,1} :\ \{(1:0)\times(t:t')\times(1:0)\times(\P1)^2\times(t':0)\times(t:0)\ :\ t\neq 0,\ t'\neq 0\} \nto{\pi_1}{\lto} \Omega_{\alpha,1},
\]
\[
 \EEE_{\alpha,2} :\ \{(0:1)\times(t:t')\times(0:1)\times(\P1)^2\times(0:t)\times(0:t')\ :\ t\neq 0,\ t'\neq 0\} \nto{\pi_1}{\lto} \Omega_{\alpha,2}.
\]
Observe that $\im(\phi_{\alpha, 1})=\PP^1\times\PP^1\times(1:0)\times(1:0)$, hence it does not define a hypersurface. Thus, $\phi_{\alpha, 1}$ does not contribute with an extraneous factor. The same for $\phi_{\alpha, 2}$. 

The situation is similar when $\alpha \in \{\{1,3\},\{1,4\},\{2,3\},\{2,4\}\}$, but quite different for $\alpha=\{3,4\}$. Take $\alpha=\{3,4\}$, the linear forms $L_3(p,\X)$ and $L_4(p,\X)$ vanish identically for all $p\in X_{\alpha}$. Take $X_{\alpha,1} = \LL_2$ and $X_{\alpha,2} = \LL_3$. Define $\Omega_{\alpha,1}:=\LL_3\setminus \{(0:1)\times(0:1)\times(1:0), (0:1)\times(1:0)\times(1:0)\}$, and observe that $\phi_{\alpha,1}:\Omega_{\alpha,1}\dto \PP_\alpha$ defines a hypersurface given by the equation $(X_2=0)$. Hence, when $\alpha=\{3,4\}$, $\phi_{\alpha,1}$ does give an extraneous factor.

Now, let us take $\alpha = \{1,2,3\}$ in order to illustrate a different situation. Verifying with the $7$ points listed above, we see that $X_\alpha= \{(1:0)\times(0:1)\times(1:0)\}\cup\{(0:1)\times(0:1)\times(0:1)\}$. Hence, there are two irreducible components $X_{\alpha,1}= \{(1:0)\times(0:1)\times(1:0)\}$ and $X_{\alpha,2}=\{(0:1)\times(0:1)\times(0:1)\}$, and clearly $\Omega_{\alpha,i}= X_{\alpha,i}$ for $i=1,2$. Thus, we get the trivial bundles 
\[
 \EEE_{\alpha,1} :\ \{(1:0)\times(1:0)\times(1:0)\times(\P1)^3\times(1:0)\ :\ t\neq 0 \textnormal{ and } t'\neq 0\} \nto{\pi_1}{\lto} \Omega_{\alpha,1},
\]
\[
 \EEE_{\alpha,2} :\ \{(0:1)\times(0:1)\times(0:1)\times(\P1)^3\times(0:1)\ :\ t\neq 0 \textnormal{ and } t'\neq 0\} \nto{\pi_1}{\lto} \Omega_{\alpha,2}.
\]
These two bundles give rise to the factors $Y_4$ and $X_4$. We conclude by a similar argument that the extraneous factor is
\[
 G= Y_1^2 X_2 Y_2 Y_3^2 X_4 Y_4.
\]
The degree of the multihomogeneous resultant $\Res_\Nc(L_1,L_2,L_3,L_4)$ in the coefficients of each $L_i$, as polynomials in $s,s',t,t',u$ and $u'$, is equal to $3\cdot 1\cdot 1\cdot 2=6$ for all $i=1,\hdots,4$ by \cite[Prop.\ 2.1, Ch.\ 13]{GKZ94}. So, the total degree of $\det((\k.)_\nu)$ is $24=4\cdot 6$. Indeed, the irreducible implicit equation is 
\[
 H= X_4^2 Y_1^2 Y_2^2 Y_3^2+2 X_4 X_2 X_3 Y_1^2 Y_2 Y_3 Y_4-X_4 X_1^2 X_3 Y_2^2 Y_3 Y_4+X_2^2 X_3^2 Y_1^2 Y_4^2,
\]
and $\deg(\phi)=2$. Thus, $\det((\k.)_\nu)=H^2\cdot G$ for $\nu \gg 0$.

\medskip

Let us change now our analysis, and consider the (smallest multiple of) the Newton polytope $\Nc$ of $f_i$ and $g_i$ for $i=1,2,3,4$. We easily see that $\Nc$ is a parallelepiped with opposite extremes in the points $(0,0,0)$ and $(1,1,2)$. For a suitable labeling of the points in $\Nc\cap \ZZ^3$ by $\{T_i\}_{i=0,\hdots,11}$, we have that the toric ideal that defines the toric embedding of $(\AA^\ast)^3 \nto{\iota}{\hto} \PP^{11}$ is

\noindent $J:= I(\Tc) = (T_9 T_{10}-T_8 T_{11}, T_7 T_{10}-T_6 T_{11}, T_5 T_{10}-T_{4} T_{11}, T_3 T_{10}-T_2 T_{11}, T_1 T_{10}-T_0 T_{11}, T^2_9-T_7 T_{11}, T_8 T_9-T_6 T_{11}, T_5 T_9- T_3 T_{11}, T_4 T_9  - T_2 T_{11}  , T_3 T_9  - T_1 T_{11}  , T_2 T_9  - T_0 T_{11}  , T^2_8  - T_6 T_{10}  , T_7 T_8  - T_6 T_9 , T_5 T_8  - T_2 T_{11}  , T_4 T_8  - T_2 T_{10}  , T_3 T_8  - T_0 T_{11}  , T_2 T_8  - T_0 T_{10}  , T_1 T_8 - T_0 T_9 , T_5 T_7  - T_1 T_{11}  , T_4 T_7  - T_0 T_{11}  , T_3 T_7  - T_1 T_9 , T_2 T_7  - T_0 T_9 , T_5 T_6  - T_0 T_{11}  , T_4 T_6  - T_0 T_{10}  , T_3 T_6  - T_0 T_9 , T_2 T_6  - T_0 T_8 , T_1 T_6  - T_0 T_7 , T_3 T_4  - T_2 T_5 , T_1 T_4  - T_0 T_5 , T^2_3  - T_1 T_5 , T_2 T_3  - T_0 T_5 , T^2_2  - T_0 T_4 , T_1 T_2  - T_0 T_3 )$.

\noindent This computation has been done in Macaulay2 using the code \cite{BotAlgo3D}.

The inclusion $\iota:(\AA^\ast)^3 \hto \PP^{11}$ defines a graded morphism of graded rings $\iota^\ast: \kk[T_0,\hdots,T_{11}]/J \to \kk[s,t,u]$. This morphism maps $T_1+T_{10}\mapsto f_1$, $T_7\mapsto g_1$, $T_4\mapsto f_2$, $T_7\mapsto g_2$, $T_6\mapsto f_3$, $T_5\mapsto g_3$, $T_0\mapsto f_4$, and $T_{11}\mapsto g_4$.

Hence, for $\alpha=\{1,2\}$, we have that
\[
 X_{\alpha}=\Proj (\kk[T_0,\hdots,T_{11}]/ (J + (T_1+T_{10},T_4,T_7))).
\]
Using Macaulay2, we can compute the primary decomposition of the radical ideal of $(T_1+T_{10},T_4,T_7)$ in $A:=\kk[T_0,\hdots,T_{11}]/ J$, obtaining the two irreducible components $X_{\alpha,1}$ and $X_{\alpha,2}$. Precisely,
\[
 X_{\alpha,1}=\Proj (\kk[T_0,\hdots,T_{11}]/ (J + (T_{10}, T_8, T_7, T_6, T_4, T_2, T_1, T_0))), \textnormal{ and}
\]
\[
 X_{\alpha,2}=\Proj (\kk[T_0,\hdots,T_{11}]/ (J + (T_{11}, T_7, T_6, T_5, T_4, T_1+T_{10}, T_0))).
\]
After embedding $(\PP^1)^3$ in $\PP^{11}$ via $\iota$, we get that $X_{\alpha,1}=\iota_\ast(L_1)$ and $X_{\alpha,2}=\iota_\ast(L_2)$ which coincides with the situation described above for $\Tc=\P1\times \P1\times \P1$.
\end{exmp}


\subsection*{Acknowledgments\markboth{Acknowledgments}{Acknowledgments}}
 I would like to thank my two advisors: Marc Chardin and Alicia Dickenstein, as well as Laurent Bus\'e and Marc Dohm for very useful discussions, ideas and suggestions. I am also grateful to the Galaad group at INRIA, for the always very kind hospitality. I would also like to thank the reviewer for his dedication and the very useful suggestions.

\def\cprime{$'$}

\end{document}